\newcommand{\mybibitem}[7]{\bibitem[#1]{#1} #2. {\it #3,} #4 {\bf #5}
#6 #7.}
\def\R{{\mathbb R}}
\def\Rn{{{\mathbb R}^n}}
\def\N{{\mathbb N}}
\def\E{\mathscr{E}}
\def\Di{{\mathbb D_\infty}}
\def\eps{\varepsilon}
\newtheorem{theorem}{Theorem}[section]
\newtheorem{cor}[theorem]{Corollary}
\newtheorem{proposition}[theorem]{Proposition}
\newtheorem{definition}[theorem]{Definition}
\newtheorem{remark}[theorem]{Remark}
\numberwithin{equation}{section}
\DeclareMathOperator{\BE}{BE}
\DeclareMathOperator{\CD}{CD}
\title[Characterizations of sets of finite perimeter using heat kernels]{Characterizations of sets of finite perimeter using heat kernels in metric spaces}
\author[Marola]{Niko Marola}\address[N.M.]{Department of Mathematics and Statistics, University of Helsinki, P.O. Box 68 (Gustaf H\"allstr\"omin katu 2b), FI-00014 University of Helsinki, Finland}\email{niko.marola@helsinki.fi} 
\author[Miranda jr.]{Michele Miranda jr.}\address[M.M]{Department of Mathematics and Computer Science, University of Ferrara, via Machiavelli 35, 44121, Ferrara, Italy}
\email{michele.miranda@unife.it} 
\author[Shanmugalingam]{Nageswari Shanmugalingam}\address[N.S.]{Department of Mathematical Sciences, University of Cincinnati, P.O.Box 210025, Cincinnati, OH 45221--0025, USA}\email{shanmun@uc.edu}
\begin{document}

\keywords{Bakry--\'Emery condition, bounded variation, Dirichlet form, doubling measure, heat kernel, heat semigroup, isoperimetric inequality, metric space, perimeter, Poincar\'e inequality, sets of finite perimeter, total variation.}
\subjclass[2010]{31C25, 26B30, 46E35, 35K05.}

\begin{abstract}
The overarching goal of this paper is to link the notion of sets of finite perimeter (a concept associated with $N^{1,1}$-spaces)
and the theory of heat semigroups (a concept related to $N^{1,2}$-spaces) in the setting of metric measure spaces whose 
measure is doubling and supports a $1$-Poincar\'e inequality.
We prove a characterization of sets of finite perimeter in terms of a short time behavior of the heat semigroup in 
such metric spaces. 
We also give a new characterization of ${\rm BV}$ functions in terms of a near-diagonal energy in this general setting.
\end{abstract}

\maketitle
\tableofcontents
\addtocontents{toc}{~\hfill\textbf{Page}\par}

\section{Introduction}

In this paper we study functions of bounded variation and, in particular, sets of finite perimeter on general metric 
measure spaces. More precisely, we investigate a relation between the perimeter of a set and the short-time behavior 
of the action of a heat semigroup on the characteristic function of such a set. First, we provide in Theorem~\ref{charBVDelta} a new characterization of 
${\rm BV}$ functions in terms of asymptotic behavior of a near-diagonal energy or, in other words, the near-diagonal part of 
the Korevaar--Schoen type energy \cite{KS}: Suppose that  $u\in L^1(X)$, then $u\in {\rm BV}(X)$ if, and only if, 
\[
\liminf_{\eps\to 0^+} \frac{1}{\eps}\int_{\Delta^\eps} \frac{|u(x)-u(y)|}{\sqrt{\mu(B_\eps(x))}\sqrt{{\mu(B_\eps}(y))}}\,  d\mu(x,y)
<\infty.
\]
Here $\Delta^\eps$, $\eps>0$, denotes the $\eps$-neighborhood of the diagonal in $X\times X$.

Second, we shall give in Theorem~\ref{charLedoux} a Ledoux-type characterization of sets of finite perimeter in terms of the heat semigroup \cite{L}, with the focus again on the near-diagonal part of the boundary of the set. To motivate the discussion below let us state at this point the main parts of Theorem~\ref{charLedoux}: If
\[
\liminf_{t\to 0^+} \frac{1}{\sqrt{t}}\int_{E^{\sqrt{t}}\setminus E}T_t\chi_{E}(x)\, d\mu(x)<\infty,
\]
then $E$ is of finite perimeter. On the other hand, if $E$ is a set of finite perimeter and satisfies an extra assumution in \eqref{eq:Extra}, then $E$ must satisfy the inequality above.

The characterizations obtained in this paper connect the theory of 
functions of boun\-ded variation and sets of finite perimeter to the theory of the heat semigroup and heat flow in metric spaces,
thus connecting the nonlinear potential theory associated with the index $p=1$ to the linear potential theory associated
with the index $p=2$. In the Euclidean setting such a connection is well-established, see for example~\cite{DeG}
and~\cite{L}. In the setting of Riemannian manifolds with lower bounded Ricci curvature, see~\cite{CM},
\cite{MP}, and~\cite{MP2}. In the more general metric setting there is precedence; indeed, it was shown in~\cite{JK} that when the measure on a metric measure space
$X$ is doubling, supports a $2$-Poincar\'e inequality, and satisfies
a curvature condition, a global isoperimetric inequality holds in $X$. In this note we are more interested in characterizing
sets of finite perimeter in terms of the asymptotic behavior of the heat extension of the characteristic function of the sets,
and to do so we need the stronger assumption of $1$-Poincar\'e inequality. However, we avoid the use of curvature conditions
by focusing on near-diagonal part of the set in question. It is not known whether the curvature condition
assumed in~\cite{JK}, together with the doubling property of the measure and the support of a $2$-Poincar\'e inequality,
implies the support of a $1$-Poincar\'e inequality. 

\subsection{Ledoux-type characterization}
It was observed by Ledoux in \cite{L} that 
the classical isoperimetric inequality in $\R^n$, and also in more general Gauss spaces, can be characterized by the fact 
that the $L^2$-norm of the heat semigroup acting on the characteristic function of sets is increasing under isoperimetric  
rearrangement. More precisely, condition that a set $B$ is isoperimetric, that is a minimizer of the perimeter
measure among all sets $E$ with the same measure as $B$,
is equivalent to the following $L^2$-inequality
\[
\Vert T_t\chi_E\Vert_{L^2(\R^n)} \leq \Vert T_t\chi_B\Vert_{L^2(\R^n)} 
\]
for all $t\geq 0$ and sets $E\subset\R^n$ with the same volume as the Euclidean ball $B$. Here $\chi_E$ denotes 
the characteristic function of the set $E$, and $T_t$ stands for the heat semigroup defined on $L^2(\Rn)$ by convolution with 
the classical Gauss--Weierstrass kernel so that $T_t f$
solves the Cauchy problem
\[
\left\{\begin{array}{l}
\partial_t u= \Delta u \\
u(0,x) = f(x), \end{array} \right.
\]
with $f\in L^2(\R^n)$. In particular, it was shown in \cite{L} (the reader is recommended to also see the paper \cite{P} by 
Preunkert) that
\begin{equation} \label{eq:Ledouxlimit}
\lim_{t\to 0^+} \sqrt{\frac{\pi}{t}} \int_{\R^n\setminus B} T_t\chi_B(x)\,dx = P(B),
\end{equation}
whenever $B$ is a Euclidean ball and $P(B)$ the perimeter of $B$ in $\R^n$. Moreover,  the inequality
\[
\sqrt{\frac{\pi}{t}} \int_{\R^n\setminus E} T_t\chi_E(x)\,dx \leq P(E)
\]
holds for every $t\geq 0$ and for all subsets $E$ of $\R^n$ with finite measure.

The authors in \cite{MP2} pursued the investigation of the relation between the perimeter of a set and the short-time 
behavior of the heat semigroup as described in~\eqref{eq:Ledouxlimit}. They observed, by considering the 
measure-theoretic properties of the reduced boundary 
of a set, that equality \eqref{eq:Ledouxlimit} is actually valid for all sets of finite perimeter. In addition, the finiteness 
of the limit on the left hand side in \eqref{eq:Ledouxlimit} is enough to characterize sets of finite perimeter in $\R^n$.
Similar characterizations have also been obtained in \cite{B}, \cite{D}, and \cite{P-P}, where more general convolution kernels 
than the classical Gauss--Weierstrass kernel were considered. On the other hand, the approach taken in \cite{MP2} has
 more geometric flavor. 

In the present paper, we study characterizations of sets of finite perimeter and hence, by the co-area formula, all 
functions in the ${\rm BV}$-class, in terms of the heat semigroup. The recent results in~\cite{JK} demonstrate that 
under some curvature 
assumptions on the metric measure space, if the space is also doubling in measure and supports a $2$-Poincar\'e
inequality, a (global) isoperimetric inequality follows. Adding this to the discussion above, it is reasonable to ask
whether the notion of a set of finite perimeter in a metric measure space (first defined in~\cite{Mr}), using $L^1$-approximations
by Lipschitz functions, is connected to the behavior of heat extension of the characteristic function of the set. In this paper we show that a Ledoux-type characterization of sets of finite perimeter in terms of the heat extension of the
characteristic function of such sets is possible if the measure on the space is doubling and supports a $1$-Poincar\'e inequality. 

The Ledoux characterization in the limit of \eqref{eq:Ledouxlimit} requires global information of the decay of the heat extension. So if the space is hyperbolic then the behavior of $E$ and $\R^n\setminus E$ far away from $\partial E$ also might play  a role in the limit given in \eqref{eq:Ledouxlimit}. We therefore modify the criterion and consider only regions near $\partial E$ as in \eqref{eq:Lelim}.

\subsection{De Giorgi-type characterization}
In the Euclidean setting the original definition of sets of finite perimeter in terms of heat extension, is due to De~Giorgi~\cite{DeG}. In Section~\ref{sect:DG} we consider a characterization of total variation and ${\rm BV}$ functions. Such a characterization
is related to the definition of sets of finite perimeter considered 
by De~Giorgi~\cite{DeG} making use of a regularization procedure based
on the heat kernel. Indeed, for a function $u\in L^1(\R^n)$ the limit
\begin{equation} \label{eq:DGlimit}
\lim_{t\to 0^+} \int_{\R^n} |\nabla T_tu(x)|\, dx 
\end{equation}
exists and is finite if, and only if, the distributional gradient of $u$ is an $\R^n$-valued measure $Du$ with finite total 
variation $|Du|(\R^n)$. Moreover, the limit in \eqref{eq:DGlimit} equals $|Du|(\R^n)$. This result has been generalized to 
the setting of Riemannian manifolds in \cite{CM}, with the restriction that the Ricci curvature of a manifold is bounded from
 below. We refer also to the result in \cite{MP}, where further bounds on the geometry of the manifold were
assumed. We refer also to a recent paper \cite{GP} where a more general condition on the Ricci curvature has been considered,
that is the Ricci tensor can be splitted into a sum of two terms, one which is bounded from below and the second one belonging to
a suitable Kato class. We mention, in passing, that in the setting of Carnot groups the authors of \cite{BMP} showed that the aforementioned result is 
valid  in a weaker sense, namely that both the limit inferior and superior are comparable to the total variation of $u$, but it is not  known whether equality holds.

In Section~\ref{sect:DG}, we shall give a generalization of the preceding result to metric measure spaces as discussed above by 
imposing an additional assumption on a Dirichlet form $\E$ compatible with the Cheeger differentiable 
structure and associated with $X$. We shall assume that the Dirichlet form as considered above satisfies the Bakry--\'Emery condition 
$\BE(K,\infty)$ (see Definition~\ref{def:BE}), for some $K\in\R$. Providing the analogous 
self-improvement property of the  $\BE(K,\infty)$ condition as obtained by Savar\'e in a recent paper \cite{S}, 
we obtain in Proposition~\ref{DGchar} a metric space version of the De Giorgi characterization of the total variation of a 
${\rm BV}$ function. We point out here, however, that the condition $\BE(K,\infty)$ is not satisfied by some 
Carnot groups, for example Heisenberg groups, and so our discussion does not overlap with that of \cite{BMP}. Recall  
that a $1$-Poincar\'e inequality follows from the $\BE(K,\infty)$ condition, or even from weaker curvature conditions like 
the $\CD(K,\infty)$ condition of Lott--Sturm--Villani. However, a complete separable metric space endowed with a probability 
measure and satisfying the  $\BE(K,\infty)$ condition need not be doubling. 

\subsection{Organization of the paper}
We have organized our paper as follows. In Section~\ref{sect:Prel} we recall the tools needed for our analysis in 
metric measure spaces as well as the basic properties of the heat semigroup and ${\rm BV}$ functions in this setting. 
Our main results are then stated in Theorem~\ref{charBVDelta} and Theorem~\ref{charLedoux} in 
Section~\ref{sect:BVchar} and Section~\ref{sect:Ledouxchar}, respectively. In Section~\ref{sect:DG} we 
consider a characterization of total variation and ${\rm BV}$ functions. In the appendix, see Section~\ref{sect:Appendix}, we 
gather together properties of the Bakry--\'Emery condition needed in Section~\ref{sect:DG}.

\smallskip

{\bf Acknowledgement.} 
The research was mostly carried out during the stay of
the three authors at Institut Mittag--Leffler, Sweden, in Fall~2013, and finalized while N.M. and M.M. were visiting N.S. at the 
University of Cincinnati, USA,  in Spring~2014. The authors wish to thank these both institutions for the support and kind hospitality.

N.M.~was supported by the Academy of Finland and the V\"ais\"al\"a Foundation. The research of M.M.~was supported 
partially by the PRIN 2012 and partially by FAR grant of the Department of Mathematics and Computer Science of University 
of Ferrara. N.S.~was partially funded by NSF grant \#DMS-1200915.

\section{Basic concepts}
\label{sect:Prel}

In this section we recall the basic concepts that allow for nonsmooth analysis on 
metric measure spaces.

\subsection{Standing assumptions} 
{\it Throughout the paper we will assume that $(X,d,\mu)$ is a complete metric
measure space equipped with a Borel regular doubling measure $\mu$ supporting a $1$-Poincar\'e inequality.} 

Recall that a Borel-regular measure $\mu$ is doubling if there exists a constant $c_D \geq 1$ such that for every ball 
$B_r(x):=B(x,r)= \{y\in X:\ d(x,y)<r\}$, $x\in X$ and $r>0$,
\[
0<\mu(B_{2r}(x))\leq c_D \mu(B_r(x))<\infty. 
\]
Moreover, $(X,d,\mu)$ supports a $1$-Poincar\'e inequality if 
there exist constants $c>0$ and $\lambda\geq 1$ such that for any $u\in {\rm Lip}(X)$ (real-valued 
Lipschitz continuous function on $X$), the inequality
\[
\int_{B_r(x)} |u(y)-u_{B_r(x)}|\,d\mu(y) \leq c_Pr \int_{B_{\lambda r}(x)} {\rm lip}(u)(y) \,d\mu(y)
\]
holds, where ${\rm lip}(u)$ is the local Lipschitz constant of $u$ defined as
\[
{\rm lip}(u)(y) :=\liminf_{\varrho\to 0^+} \sup_{z\in B_\varrho(y)}\frac{|u(y)-u(z)|}{d(y,z)};
\]
we shall denote by $c_P$ the minimal constant verifying the Poincar\'e inequality.
We write the integral average of a function $u$ over a ball $B_r(x)$ as $u_{B_r(x)}$. Let us mention in passing that, 
by the doubling property, for every $B_R(x)\subset X$ and $y\in B_R(x)$ and for $0<r\leq R<\infty$ the inequality
\begin{equation} \label{eq:doublingest}
\frac{\mu(B_R(x))}{\mu(B_r(y))} \leq C\left(\frac{R}{r}\right)^{q_\mu},
\end{equation}
holds, where $C$ is a positive constant depending only on $c_D$ and
$q_\mu = \log_2c_D$. In what follows, $q_\mu$ denotes a counterpart of
dimension related to the measure $\mu$ on $X$. 

Unless otherwise stated, $C$ will denote a positive constant whose
exact value is not important and it may change even within a line. A
concentric $\alpha$-dilate, $\alpha>1$, of a ball $B=B_r(x)=B(x,r)$ is
written as $\alpha B$ or $B_{\alpha r}(x)$. We write the topological
boundary of a set $E$ as $\partial E$, and $\partial^*E$ denotes the
measure theoretic boundary of $E$, i.e. the set of points $x\in X$
where both $E$ and its complement $X\setminus E$ have positive upper
density. Recall that $\partial^*E\subset \partial E$. 

Finally, by the Hausdorff measure of co-dimension one we mean the generalized
spherical Hausdorff measure $\mathcal{H}$ obtained by applying the
Carath\'eodory construction to the function $h(B_r(x)) =
\mu(B_r(x))/r$ (see \cite{A}).

\subsection{Differentiable structure} 
An upper gradient for an extended real-valued function $u$ on $X$ 
is a Borel function $g:X\to[0,\infty]$ such that
\begin{equation}\label{upperGradient}
|u(\gamma(0))-u(\gamma(l_\gamma))|\leq \int_\gamma g\, ds
\end{equation}
for every nonconstant compact rectifiable curve $\gamma:[0,l_\gamma]\to X$.
This notion is due to~\cite{HK}.
We say that $g$ is a $p$--weak upper gradient of $u$ if \eqref{upperGradient} holds for $p$--almost every curve,
see~\cite{KM}. 
If $u$ has an upper gradient in $L^p(X)$, then there exists a unique minimal $p$--weak upper gradient
$g_u\in L^p(X)$ of $u$, where $g_u\leq g$ $\mu$-a.e. for every
$p$--weak upper gradient $g\in L^p(X)$ of $u$. 

Under our standing assumptions on $X$, we have also the Cheeger
differentiable structure available (see \cite{C}) that allow us to
define a linear gradient operator for Lipschitz functions. There
exists a countable measurable partition $U_\alpha$ of $X$, and
Lipschitz coordinate charts
$X^\alpha=(X_1^\alpha,\ldots,X_{k_\alpha}^\alpha):X\to\R^{k_\alpha}$
such that $\mu(U_\alpha)>0$ for each $\alpha$, and $\mu(X\setminus
\bigcup_\alpha U_\alpha)=0$. Moreover, for all $\alpha$ the charts
$(X_1^\alpha,\ldots,X_{k_\alpha}^\alpha)$ are linearly independent on
$U_\alpha$ and $1\leq k_\alpha\leq N$, where $N$ is a constant
depending on $c_D$, $c_P$, and $\lambda$, and in addition for any
Lipschitz function $u:X\to\R$ there is an associated unique (up to a
set of zero $\mu$-measure) measurable function $D_\alpha u:U_\alpha\to
\R^{k_\alpha}$ for which the following Taylor-type approximation
\[
u(x)=u(x_0)+D_\alpha u(x_0)\cdot (X_\alpha(x)-X_\alpha(x_0))+o(d(x,x_0))
\]
holds for $\mu$-a.e. $x_0\in U_\alpha$. In particular, for $x\in
U_\alpha$ there exists a norm $\|\cdot\|_x$ on $\R^{k_\alpha}$ equivalent to
the Euclidean norm $|\cdot|$, such that $g_u(x)=\|D_\alpha u(x)\|_x$ for 
almost every $x\in U_\alpha$. Moreover, it is possible to show that there exists a constant $c>1$ such that 
$c^{-1}g_u(x) \leq |Du(x)|\leq cg_u(x)$ for all Lipschitz functions $u$ and $\mu$-a.e. $x\in X$. By $Du(x)$ we
mean $D_\alpha u(x)$ whenever $x\in U_\alpha$. Indeed, one can choose the
cover such that $U_\alpha\cap U_\beta$ is empty whenever $\alpha\ne\beta$.

For the definition of the Sobolev spaces $N^{1,p}(X)$ we will
follow \cite{Sh}. Since we assume $X$ to satisfy a $1$-Poincar\'e inequality, the  Sobolev space $N^{1,p}(X)$, 
$1\leq p<\infty$, can also be defined as the closure of the collection of Lipschitz functions on $X$ in the 
$N^{1,p}$-norm defined as
$\|u\|_{1,p}^p=\|u\|_{L^p(X)}^p+\|g_u\|_{L^p(X)}^p$. The space $N^{1,p}(X)$ equipped with the 
$N^{1,p}$-norm is a Banach space and a lattice \cite{Sh}. By \cite{FHK}, the Cheeger differentiable structure extends 
to all functions in $N^{1,p}(X)$.

\subsection{Semigroup associated with $\E$} \label{sect:Semi}

In the metric space setting, there is a standard way to construct a semigroup by 
using the Dirichlet forms approach. The best way to construct it is to use the 
$L^2$--theory of (bilinear) Dirichlet forms and then 
apply a classical result asserting that such semigroup can be extrapolated to any $L^p$, $1\leq p \leq \infty$. 

We start with the following Dirichlet form $\E:L^2(X)\times L^2(X) \to [-\infty,\infty]$ defined in terms of the Cheeger differentiable structure by
\[
\E(u,v)=\int_X Du(x) \cdot Dv(x)\, d\mu(x)
\]
with domain $D(\E):=N^{1,2}(X)$ (if $u$ or $v$ does not belong to $N^{1,2}(X)$, then 
$\E(u,v)=\infty$). This bilinear form is an example of a regular and strongly local Dirichlet form as defined in~\cite{FOT}. 
The associated infinitesimal generator $A$ acts on a dense subspace $D(A)$ of 
$N^{1,2}(X)$ so  that for each $u\in D(A)$ and for every $v\in N^{1,2}(X)$,
\[
\int_X vAu\, d\mu = -\E(u, v).
\]
The operator $A$ is dissipative in the sense that
\[
\int_X uA u\, d\mu =-\E(u,u)\leq 0
\]
and is merely the Laplacian $\Delta$ when $X=\R^n$, the Cheeger differentiable structure is the standard Euclidean structure, and $\mu$ is the Lebesgue measure.

We also point out that under our standing assumptions, the metric induced by
the form
\[
d_\E (x,y) =\sup \left\{ 
f(x)-f(y)\colon f\in N^{1,2}(X),\ |Df|\leq 1\,  \mu\mbox{-a.e.}
\right\}
\]
is bi-Lipschitz equivalent to the original metric $d$. Note that the Cheeger differential structure satisfies $|Df|$ is comparable to the minimal 2-weak upper gradient of $f$. Therefore, if $|Df|\leq 1$ $\mu$-a.e. in $X$ then $f$ has a Lipschitz continuous representative. 

\begin{remark}\label{rem:propHeat}
Associated with the above Dirichlet form $\E$ and its infinitesimal generator $A$ there 
is a Markov semigroup $(T_t)_{t>0}$ acting on $L^2(X)$ with the following properties
(we refer to \cite{FOT}, \cite{KRS}, or \cite{MMS} for properties (1) through (7)):
\begin{enumerate}
\item $T_t\circ T_s = T_{t+s}$ for all $t,s>0$;
\item since $A$ is symmetric in $L^2(X)$, then $T_t$ is self adjoint in $L^2(X)$, that is
\[
\int_X T_t f g\, d\mu =\int_X fT_t g\,d\mu,
\]
for all $f,g\in L^2(X)$;
\item $T_tf \to f$ in $L^2(X)$ when $t\to 0$;
\item since $A$ is dissipative, $T_t$ is a contraction in $L^2(X)$, i.e. $\|T_t f\|_{L^2(X)} \leq \|f\|_{L^2(X)}$ for all $f\in L^2(X)$ and $t>0$;
\item if $f\in D(A)$, then $\frac1{t}(T_t f-f) \to Af$ in $L^2(X)$ as $t\to 0$;
\item for all $t>0$ and $f\in L^2(X)$, we have that
$\partial_t T_tf$ and $AT_tf$ are both in $L^2(X)$ and
\[
\partial_t T_tf = AT_tf;
\]
\item $(T_t)_{t>0}$ is a Markovian semigroup, that is if $0\leq f\leq 1$, then $0\leq T_t f\leq 1$;
\item $(T_t)_{t>0}$ can be extended to $L^\infty(X)$ by first considering positive 
functions $f\geq0$ and sequences $f_n\in L^2(X)$, $f_n\nearrow f$ and setting
\[
T_t f=\lim_{n\to \infty} T_t f_n,
\]
we refer to \cite[p. 56]{FOT};
\item since the measure $\mu$ is doubling, $(T_t)_{t>0}$ is stochastically complete : $T_t 1=1$ \cite[Theorem 4]{St1} (see also \cite{G} for stochastic completeness in the manifold setting);
\item Since $|T_t f|\leq T_t |f|$ $\mu$-a.e. for $f\in L^1(X)\cap L^\infty(X)$, and since $L^1(X)\cap L^\infty(X)$ is dense in $L^1(X)$, $(T_t)_t$ can be extended to a contraction semigroup on $L^1(X)$.
\end{enumerate}
\end{remark}

A measurable function $p\colon\R\times X\times X \to [0,\infty]$ is called a {\it heat kernel} or {\it transition function} on $X$ associated with the semigroup $(T_t)_{t>0}$ if 
\[
T_t f(x)=\int_X p(t,x,y)f(y) \, d\mu(y),
\]
for every $f\in L^2(X)$ and for every $t>0$. For $t\leq 0$ we have $p(t,x,y) =0$, and $p(t,x,y)=p(t,y,x)$ by the symmetry 
of the semigroup. The existence of a heat kernel is a direct consequence of the linearity of the operator 
$f\mapsto T_tf$ together with the $L^\infty$-boundedness of such an operator (Markovian 
property) and the
Riesz representation theorem, see for instance Sturm~\cite[Proposition 2.3]{St2}. 
Standard regularity arguments on doubling metric measure spaces admitting a $2$-Poincar\'e inequality imply that the map $x\mapsto p(t,x,y)$
is H{\"o}lder continuous for any $(t,y)\in (0,\infty)\times X$. 

Under our standing assumptions on $X$, we have the following estimates for a heat kernel uniformly for all $x,y\in X$ 
and all $t>0$, we refer to \cite[Corollary 4.10]{St3}. There are positive constants $C,C_1,C_2$ such that
\begin{align}
  p(t,x,y) & \geq \frac{C^{-1}e^{-\frac{d(x,y)^2}{C_1 t}}}{\sqrt{\mu(B_{\sqrt{t}}(x))}\sqrt{\mu(B_{\sqrt{t}}(y))}}, \label{eq:KEL}\\
     p(t,x,y) & \le \frac{Ce^{-\frac{d(x,y)^2}{C_2 t}}}{\sqrt{\mu(B_{\sqrt{t}}(x))}\sqrt{\mu(B_{\sqrt{t}}(y))}}. \label{eq:KEU}
\end{align}

\subsection{BV functions and sets of finite perimeter}

Following \cite{Mr}, we say that a function $u\in L^1(X)$ is of bounded variation ($u\in{\rm BV}(X)$) if 
\begin{equation*}\label{totVar}
  \|Du\|(X)=\inf \left\{
    \liminf_{j\to\infty} \int_X g_{u_j}\, d\mu: u_j\in {\rm Lip}_{\textup{loc}}(X),\, u_j\to u \mbox{ in } L^1_{\textup{loc}}(X)\right\}
\end{equation*}
is finite, where $g_{u_j}$ is the minimal $1$-weak upper gradient of
$u_j$. Moreover, a Borel set $E\subset X$ is said to have finite
perimeter if $\chi_E\in {\rm BV}(X)$. We denote the perimeter measure
of $E$ by $P(E)=\|D\chi_E\|(X)$. By replacing $X$ with an open set $F$
we may define $\|Du\|(F)$ and we shall write the perimeter measure of
$E$ with respect to $F$ as $P(E,F)=\|D\chi_E\|(F)$.

Strictly speaking, the definition given in~\cite{Mr} considers ${\rm
  lip}(u_j)$ instead of $g_{u_j}$. However, under our standing
assumptions of doubling property and $1$-Poincar\'e inequality, ${\rm
  lip}(u_j) = g_{u_j}$ $\mu$-a.e. in $X$ (see~\cite{C} or~\cite{HKST}). 

An equivalent definition can be also given by way of the Cheeger
differentiable structure by replacing a $1$-weak upper gradient of
$u_j$ with its Cheeger derivative.  We shall say that $u$ has bounded
total Cheeger variation if $\|D_c u\|(X)<\infty$. A set with Cheeger
finite perimeter is a $\mu$-measurable set $E$ such that
$\|D_c\chi_E\|(X)<\infty$.  By the results contained in \cite{C}, it
follows that these two definitions are equivalent, in the sense that
$C^{-1}\|D_c u\| \leq \|D u\| \leq C\|D_c u\|$. For further equivalent definitions of $\|Du \|$, perhaps more fruitful in a metric setting 
where Poincar\'e inequality and doubling properties may not be available, see~\cite{AD}. 

It follows from a $1$-Poincar\'e inequality that for each $u\in{\rm
  BV}(X)$
\begin{equation*}
\int_{B_r(x)}|u(y)-u_{B_r(x)}|\, d\mu(y) \leq c_Pr\|Du\|(B_{2\lambda r}(x)).
\end{equation*}
The factor of $2$ in the ball on the right-hand side follows from the fact that weak limits of measures might charge the boundary 
of $B_{\lambda r}(x)$. In particular, if $E$ is a set of finite perimeter and $u=\chi_E$ we recover the following form of the 
relative  isoperimetric inequality
\begin{equation} \label{eq:IPI}
\mu(B_r(x)\cap E)\frac{\mu(B_r(x)\setminus E)}{\mu(B_r(x))} \leq c_PrP(E,B_{2\lambda r}(x)).
\end{equation}

From the proof of~\cite[Theorem~1.1]{BH} (see Section~3 of~\cite{BH})
and the above 1-Poincar\'e inequality, we have the following regularity for
sets of finite perimeter. Given a Lipschitz function $f$ on $X$, for $t\in\R$ we set $A_t=\{x\in X\, :\, f(x)>t\}$. Then for almost
every $t\in\R$ we have that 
\begin{equation}\label{Mink-Per}
 \liminf_{r\to 0} \frac{\mu\left(\bigcup_{x\in \partial A_t}B_r(x)\right)}{r}\le CP(A_t)<\infty.
\end{equation}

Next, we recall that for any $u\in {\rm BV}(X)$ and Borel set $E\subset X$ the co-area formula
\begin{equation} \label{eq:coarea}
\int_{-\infty}^\infty P(\{x\in X\colon u(x)>t\}, E)\, dt = \|Du\|(E)
\end{equation}
holds, and for the proof, we refer to \cite[Proposition 4.2]{Mr}.

Finally, we recall that the measure $P(E,F)$ is concentrated on the set
\[
\Sigma_\gamma = \left\{x:\ \limsup_{r\to 0}\min\left\{\frac{\mu(B_r(x)\cap E)}{\mu(B_r(x))}, \frac{\mu(B_r(x)\setminus E)}{\mu(B_r(x))}\right\}\geq \gamma\right\}\subset \partial^*E,
\]
where constant $\gamma>0$ depends only on $c_D, c_P$ and $\lambda$, and moreover $\mathcal{H}(\partial^*E\setminus \Sigma_\gamma)=0$ and $\mathcal{H}(\partial^*E)<\infty$. We refer the reader to \cite[Theorem~5.3, Theorem~5.4]{A}.

\section{A characterization of ${\rm BV}$ functions}
\label{sect:BVchar}

In the setting of metric measure spaces satisfying the standing assumptions listed in Section~2, we now 
give a new characterization of ${\rm BV}$ functions in terms of the near-diagonal parts of the 
Korevaar--Schoen type energy \cite{KS}. The product measure 
$\mu\otimes\mu$ in the space $X\times X$ shall be written as $\mu(x,y)$. 

We point out here that in the more general
setting of topological spaces $X$, with $X\times X$ equipped with a nonnegative symmetric Radon measure 
$\Gamma$ locally finite outside the diagonal, Maz'ya proved in~\cite{M} a conductor inequality for compactly 
supported continuous functions $f$ on $X$ for which
\[
  \langle f\rangle_{p,\Gamma}^p:=\int_{X\times X}|f(x)-f(y)|^p\, d\Gamma(x,y)<\infty.
\]
It was shown in~\cite[Theorem 2, Section~4]{M} that, when $1\le p\le q<\infty$ and $a>1$, the following 
co-area--type integral can be majorized by the energy
$\langle f\rangle_{p,\Gamma}$ 
\begin{equation}\label{eq:Mazya}
\left(\int_0^\infty {\rm cap}_{p,\Gamma}(\overline{M}_{at}, M_t)^{q/p}\, dt^q\right)^{p/q} \leq C\langle f\rangle_{p,\Gamma}^p,
\end{equation}
where $M_t = \{x\in X\colon |f(x)|>t\}$ and the capacity is obtained via minimization of 
$\langle \varphi\rangle_{p,\Gamma}^p$ over all admissible functions $\varphi$. With $p=q=1$, in general
the constant $C$ in \eqref{eq:Mazya} depends on $a$, and blows up in the order of
$(a-1)^{-1}$ as $a\to 1$, see~\cite{M}. 

If we now take $\Gamma_a$ to be defined by
\[
 d\Gamma_a(x,y)=\frac{\chi_{\Delta^{a-1}}(x,y)\, d\mu(x,y)}{(a-1)\sqrt{\mu(B_{a-1}(x))}\sqrt{\mu(B_{a-1}(y))}},
\]
we obtain a near-diagonal energy
\[
  \langle f\rangle_{1,\Gamma_a}=
     \frac{1}{a-1}\int_{\Delta^{a-1}} \frac{|f(x)-f(y)|}{\sqrt{\mu(B_{a-1}(x))}\sqrt{{\mu(B_{a-1}}(y))}}\,  d\mu(x,y),
\]
with the constant $C$ in~\eqref{eq:Mazya} now independent of $a$. 
Here $\Delta^\eps$, $\eps>0$, denotes the $\eps$-neighborhood of the diagonal in $X\times X$, i.e.
\[
\Delta^\eps =\{ (x,y)\in X\times X: d(x,y)<\eps\}.
\]
We point out here that when $p=q=1$, the above Maz'ya-type inequality, \eqref{eq:Mazya}, in this setting, is equivalent to the the following generalization of the co-area inequality (with $\eps=a-1$)
\[
   \limsup_{\eps\to 0}\frac{1}{\eps}\int_0^\infty \left(\int_{M_t}
    \int_{B_\eps(x)\setminus M_t} \frac{d\mu(y)d\mu(x)}{\sqrt{\mu(B_\eps(x))}\sqrt{\mu(B_\eps(y))}}
        \right)\, dt
     \le C\, \limsup_{\eps\to 0}\langle f\rangle_{1,\Gamma_{1+\eps}}
\]
associated with the near-diagonal Korevaar-Schoen energy.

In this section we will show that in our more specialized setting of metric measure spaces, the family (with respect
to $\eps>0$) of above energies $\langle f\rangle_{1,\Gamma_{1+\eps}}$ 
corresponding to a given function $f$ has a finite limit infimum as $\eps\to 0$
if and only if $f$ is in the ${\rm BV}$ class. The proof will also show that for small $\eps>0$ the above 
energy is controlled by a constant multiple of the ${\rm BV}$-energy of $f$. This also provides a connection 
between the energy studied
by Maz'ya~\cite{M} in our, more specialized, setting and the ${\rm BV}$-energy. 

Let us now formulate the main theorem of this section.
\begin{theorem}\label{charBVDelta}
Suppose that  $u\in L^1(X)$. Then $u\in {\rm BV}(X)$ if, and only if, 
\[
\liminf_{\eps\to 0^+} \frac{1}{\eps}\int_{\Delta^\eps} \frac{|u(x)-u(y)|}{\sqrt{\mu(B_\eps(x))}\sqrt{{\mu(B_\eps}(y))}}\,  d\mu(x,y)
<\infty.
\]
\end{theorem}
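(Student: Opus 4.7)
My plan is to handle both implications simultaneously by first reducing to the case of characteristic functions of sets via a layer-cake decomposition, and then proving a two-sided comparison between $P(E)$ and the asymptotic value of the near-diagonal energy applied to $\chi_E$. Setting $E_t=\{u>t\}$, the identity $|u(x)-u(y)|=\int_{-\infty}^{\infty}|\chi_{E_t}(x)-\chi_{E_t}(y)|\,dt$ together with Fubini yields
\[
\frac{1}{\eps}\int_{\Delta^\eps}\frac{|u(x)-u(y)|}{\sqrt{\mu(B_\eps(x))\mu(B_\eps(y))}}\,d\mu(x,y)=\int_{-\infty}^{\infty}\left(\frac{1}{\eps}\int_{\Delta^\eps}\frac{|\chi_{E_t}(x)-\chi_{E_t}(y)|}{\sqrt{\mu(B_\eps(x))\mu(B_\eps(y))}}\,d\mu(x,y)\right)dt,
\]
while the coarea formula \eqref{eq:coarea} gives $\|Du\|(X)=\int_{-\infty}^{\infty}P(E_t)\,dt$. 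Thus it suffices to establish a uniform two-sided comparison between $P(E)$ and the corresponding near-diagonal energy for Borel sets $E\subset X$; the implications for $u$ then follow by integrating in $t$ and applying Fatou's lemma on the ${\rm BV}$ side.

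For the direction $P(E)<\infty\Rightarrow$ the near-diagonal energy is uniformly bounded by $CP(E)$, doubling gives $\sqrt{\mu(B_\eps(x))\mu(B_\eps(y))}\simeq\mu(B_\eps(x))$ on $\Delta^\eps$, reducing the energy up to constants to $(1/\eps)\int_E\mu(B_\eps(x)\setminus E)/\mu(B_\eps(x))\,d\mu(x)$. I would split $E$ into the high-density part $A_\eps=\{x\in E:\mu(B_\eps(x)\cap E)\geq\tfrac12\mu(B_\eps(x))\}$ and its complement in $E$. On $A_\eps$ the relative isoperimetric inequality \eqref{eq:IPI} yields the pointwise bound $\mu(B_\eps(x)\setminus E)/\mu(B_\eps(x))\leq 2c_P\eps P(E,B_{2\lambda\eps}(x))/\mu(B_\eps(x))$, so Fubini together with the doubling estimate \eqref{eq:doublingest} gives the $A_\eps$-contribution $\leq C\eps P(E)$. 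On $E\setminus A_\eps$ the integrand is bounded by $1$, and $\mu(E\setminus A_\eps)$ is controlled by the measure of the $\eps$-neighborhood of $\partial E$, which in turn is at most $C\eps P(E)$ by the Minkowski-content estimate \eqref{Mink-Per} applied to level sets of a Lipschitz approximation of $\chi_E$.

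For the harder direction $\liminf_\eps[\cdots]=M<\infty\Rightarrow P(E)\leq CM$, I would construct Lipschitz approximations of $\chi_E$ at scales realizing the $\liminf$. Fix $\eps_n\to 0$ with near-diagonal energy at scale $\eps_n$ at most $M+1/n$, set $\delta_n=\eps_n/c_0$ for a suitable Whitney constant $c_0$, take a maximal $\delta_n$-separated net $\{x_i\}$ in $X$, and choose a Lipschitz partition of unity $\{\psi_i\}$ subordinate to $B_i:=B_{2\delta_n}(x_i)$ with bounded overlap and $\mathrm{Lip}(\psi_i)\leq C/\delta_n$. Put $a_i=\mu(B_i\cap E)/\mu(B_i)$ and $v_n=\sum_i a_i\psi_i$. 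Since $\sum_i\psi_i\equiv 1$, the local Lipschitz constant of $v_n$ at $x$ is bounded by $(C/\delta_n)\max\{|a_i-a_j|:\psi_i(x)\psi_j(x)>0\}$, and since
\[
|a_i-a_j|\leq\frac{1}{\mu(B_i)\mu(B_j)}\int_{B_i}\int_{B_j}|\chi_E(y)-\chi_E(z)|\,d\mu(y)\,d\mu(z),
\]
integrating over $x$ and applying Fubini together with bounded overlap of the cover collapses the combinatorial double sum into a single near-diagonal integral on $\Delta^{c_0\delta_n}=\Delta^{\eps_n}$, yielding $\int_X g_{v_n}\,d\mu\leq C(M+1/n)$. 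The Vitali form of Lebesgue differentiation on doubling spaces gives $v_n\to\chi_E$ $\mu$-a.e.; combined with $|v_n|\leq 1$ and $v_n$ supported near $E$, dominated convergence yields $v_n\to\chi_E$ in $L^1_{\mathrm{loc}}(X)$, so the definition of ${\rm BV}$ from the excerpt gives $P(E)\leq CM$.

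The main obstacle is precisely this reverse direction, and specifically the combinatorial bookkeeping: after applying Fubini to $\int_X g_{v_n}\,d\mu$, one needs each near-diagonal pair $(y,z)\in\Delta^{\eps_n}$ to be counted only a bounded number of times by the double sum over neighboring balls $(B_i,B_j)$ containing a common point $x$. The bounded overlap of the cover combined with the doubling estimate \eqref{eq:doublingest} at the comparable scales $\delta_n$ and $c_0\delta_n$ is exactly what ensures this uniformly, but executing the estimate with the correct constants is the technical heart of the proof.
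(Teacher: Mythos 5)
Your overall strategy—layer-cake/coarea reduction to characteristic functions, then a two-sided comparison between $P(E)$ and the near-diagonal energy of $\chi_E$—is a genuinely different route from the paper's, which works directly with $u$ in both directions: a bounded-overlap cover together with the $1$-Poincar\'e inequality for ${\rm BV}$ functions in the forward direction, and a discrete-convolution/partition-of-unity argument applied to $u$ itself (with coefficients the averages $u_{B_i^\eps}$) in the converse. The coarea detour costs you something in the converse direction: you need the converse coarea implication, namely that $u\in L^1(X)$ with $\int_{\R} P(\{u>t\})\,dt<\infty$ forces $u\in{\rm BV}(X)$, whereas the paper's citation of \cite[Proposition~4.2]{Mr} is stated only for $u\in{\rm BV}(X)$. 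That implication is standard but needs an explicit reference or argument. With that caveat, your converse direction is essentially the paper's argument specialized to $\chi_{E}$ and is sound.

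The real gap is in the forward direction, in the treatment of the low-density set $E\setminus A_\eps$. You justify $\mu(E\setminus A_\eps)\le C\eps P(E)$ by invoking the Minkowski-content estimate \eqref{Mink-Per} ``applied to level sets of a Lipschitz approximation of $\chi_E$.'' This does not work: \eqref{Mink-Per} concerns almost every superlevel set of a \emph{fixed Lipschitz} function $f$, and there is no mechanism to transfer it from level sets of an $L^1$-approximant of $\chi_E$ to the set $E$ itself. Indeed, the remark following Theorem~\ref{charLedoux} exhibits an open set $E\subset\R^2$ of finite perimeter with $\mu(E^{\sqrt t}\setminus E)=\infty$ for all $t>0$, so the Minkowski-type bound genuinely fails for general sets of finite perimeter; this is precisely why Theorem~\ref{charLedoux}(2) imposes \eqref{eq:Extra} as an extra hypothesis. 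Your inequality $\mu(E\setminus A_\eps)\le C\eps P(E)$ happens to be true, but for a different reason: for $x\in E\setminus A_\eps$, the relative isoperimetric inequality \eqref{eq:IPI} together with $\mu(B_\eps(x)\setminus E)>\tfrac12\mu(B_\eps(x))$ yields $\mu(B_\eps(x)\cap E)\le 2c_P\eps\,P(E,B_{2\lambda\eps}(x))$, and covering $E\setminus A_\eps$ by balls $B_\eps(x_i)$ centered in $E\setminus A_\eps$ with bounded overlap of the $2\lambda$-dilates then gives the bound after summation. Even simpler is to drop the density split entirely and estimate $\int_{E\cap B_\eps(x_i)}\mu(B_\eps(x)\setminus E)/\mu(B_\eps(x))\,d\mu(x)$ ball-by-ball via \eqref{eq:IPI}, which is close in spirit to how the paper handles the forward direction for general $u$ via Poincar\'e on a bounded-overlap cover.
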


\begin{remark} \label{remark3.1}
As can be deduced from the proof, we in fact will show that if $u\in BV(X)$ or if the limit in the above theorem is finite,
then the limit \emph{supremum} of the
expression considered in the theorem is comparable with
$|Du|$, that is, there is a constant $C\ge 1$, that depends only
on the data of the space $X$, such that
\[ 
C^{-1}\Vert Du\Vert(X)\le \liminf_{\eps\to 0^+} \frac{1}{\eps} \int_{\Delta_\eps}\frac{|u(x)-u(y)|}{\sqrt{\mu(B_\eps(x))}\sqrt{\mu(B_\eps(y))}}\,d\mu(x,y)
 \le C\Vert Du\Vert(X). 
\]
\end{remark}

\begin{proof}[Proof of Theorem~\ref{charBVDelta}]
Let us assume first that $u\in {\rm BV}(X)$ and fix $\eps>0$. Then we can find a sequence of points in $X$, 
$\{x_i\}_{i\in \N}$, such that 
\[
X=\bigcup_{i\in \N} B_\eps(x_i),\ \text{ and }\ B_{\eps/2}(x_i)\cap B_{\eps/2}(x_j)=\emptyset\ \textrm{ whenever }\ i\neq j,
\]
and such that the covering has bounded overlap
\begin{equation} \label{eq:BoundedOverlap}
\sum_{i\in \N} \chi_{B_{4\lambda \eps}(x_i)}(x)\leq c_O.
\end{equation}
Since for $x\in B_\eps(x_i)$ and $y\in B_\eps(x)$ we have that $B_\eps(x_i)\subset B_{2\eps}(x)$
and $B_{\eps}(x_i)\subset B_{4\eps}(y)$, we get, by the doubling property of $\mu$,
\[
\mu(B_\eps(x_i))\leq \mu(B_{4\eps}(y))\leq c_D^2 \mu(B_\eps(y)),
\]
\[
\mu(B_\eps(x_i))\leq \mu(B_{2\eps}(x))\leq c_D \mu(B_\eps(x)).
\]
We obtain
\begin{align*}
\int_{\Delta^\eps} &\frac{|u(x)-u(y)|}{\sqrt{\mu(B_\eps(x))}\sqrt{\mu(B_\eps(y))}}\, 
d\mu(x,y) \\
\leq &
c_D^{\frac{3}{2}} \sum_{i\in \N} \frac{1}{\mu(B_\eps(x_i))} \int_{B_\eps(x_i)} 
\int_{B_\eps(x)} |u(x)-u(y)|\,d\mu(y) d\mu(x)\\
\leq &
c_D^{\frac{3}{2}} \sum_{i\in \N} \frac{1}{\mu(B_\eps(x_i))} \int_{B_\eps(x_i)} 
\int_{B_\eps(x)} |u(x)-u_{B_\eps(x_i)}|\,d\mu(y) d\mu(x) \\
& \quad +c_D^{\frac{3}{2}} \sum_{i\in \N} \frac{1}{\mu(B_\eps(x_i))} \int_{B_\eps(x_i)}
\int_{B_\eps(x)} |u(y)-u_{B_\eps(x_i)}|\,d\mu(y) d\mu(x) \\
=&C(I_1+I_2).
\end{align*}
Since $B_\eps(x)\subset B_{2\eps}(x_i)$ for $x\in B_\eps(x_i)$ we get, by a $1$-Poincar\'e inequality,
\begin{align*}
I_1=& \sum_{i\in \N} \frac{1}{\mu(B_\eps(x_i))} \int_{B_\eps(x_i)}
\mu(B_\eps(x)) |u(x)-u_{B_\eps(x_i)}|\,d\mu(x) \\
\leq &
\sum_{i\in \N} \frac{\mu(B_{2\eps}(x_i))}{\mu(B_\eps(x_i))} 
\int_{B_\eps(x_i)} |u(x)-u_{B_\eps(x_i)}|\,d\mu(x) \\
\leq & \eps c_P c_D \sum_{i\in \N}  \|Du\|(B_{2\lambda \eps}(x_i)) \leq \eps C\|Du\|(X).
\end{align*}
We treat the term $I_2$ in a similar fashion
\begin{align*}
I_2\leq&
\sum_{i\in \N} \frac{1}{\mu(B_\eps(x_i))} \int_{B_\eps(x_i)}\,d\mu(x)\int_{B_{2\eps}(x_i)} |u(y)-u_{B_\eps(x_i)}|\, d\mu(y) \\
=& \sum_{i\in \N} 
\int_{B_{2\eps}(x_i)} |u(y)-u_{B_\eps(x_i)}|\,d\mu(y) \\
\leq & \sum_{i\in \N} 
\int_{B_{2\eps}(x_i)} |u(y)-u_{B_{2\eps}(x_i)}|\,d\mu(y) \\
& \quad + \sum_{i\in \N} 
|u_{B_\eps(x_i)}-u_{B_{2\eps}(x_i)}| \mu(B_{2\eps}(x_i))
\\
\leq&
2\eps c_P(1+c_D) \sum_{i\in \N} \|Du\|(B_{4\lambda\eps}(x_i))
\leq \eps C\|Du\|(X),
\end{align*}
where we have used a $1$-Poincar\'e inequality and the fact that
\begin{align*}
\mu(B_{2\eps}(x_i)) |u_{B_\eps(x_i)}-u_{B_{2\eps}(x_i)}|\leq&
C \int_{B_\eps(x_i)} |u(x)-u_{B_{2\eps}(x_i)}|\,d\mu(x) \\
\leq&
C\int_{B_{2\eps}(x_i)} |u(x)-u_{B_{2\eps}(x_i)}|\,d\mu(x) \\
\leq & \eps C\|Du\|(B_{4\lambda\eps}(x_i)).
\end{align*}
This completes the proof of the claim that if $u\in{\rm BV}(X)$ then the limit infimum is finite (in fact, we have obtained that the $\limsup$ is finite).

For the converse statement, we assume that
\[
\liminf_{\eps\to 0^+} \frac{1}{\eps}\int_{\Delta^\eps} \frac{|u(x)-u(y)|}{\sqrt{\mu(B_\eps(x))}\sqrt{{\mu(B_\eps}(y))}}\,  d\mu(x,y)
<\infty.
\]
For every $\eps>0$, let us define a positive and finite measure on $X$ as
\begin{equation}\label{eq:meas-def}
d\mu_\eps(x) = \widetilde u_{B_\eps}(x)\,d\mu(x),
\end{equation}
where for each $x\in X$ we set
\[
\widetilde u_{B_\eps}(x) = \frac{1}{\mu(B_\eps(x))}\int_{B_\eps(x)}\frac{|u(x)-u(y)|}{\eps}\, d\mu(y).
\]
Note that 
\[ 
\int_X\tilde{u}_\eps(x)\, d\mu(x)
\le \frac{C}{\eps}\int_{\Delta_\eps} \frac{|u(x)-u(y)|}
{\sqrt{\mu(B_\eps(x))}{\sqrt{\mu(B_\eps(y))}}}\, d\mu(x,y), 
\]
and so we have that 
\begin{equation} \label{eq:Rev}
\lim_{\eps\to 0^+}\int_X\tilde{u}_\eps(x)\, d\mu(x)\le 
\liminf_{\eps\to 0^+}\frac{C}{\eps}
\int_{\Delta_\eps} \frac{|u(x)-u(y)|}
{\sqrt{\mu(B_\eps(x))}{\sqrt{\mu(B_\eps(y))}}}\, d\mu(x,y).
\end{equation}

Let $\{B_i^\eps=B_\eps(x_i)\}$ be a family of balls which covers  $X$ with the following bounded overlap property:
\[
\sum_{i\in \N} \chi_{B_{6\eps}(x_i)}(x)\leq c_O.
\]
Then, with respect to this family of balls, there exists a collection $(\varphi_i^\eps)_i$ of functions on $X$ such that each $\varphi_i^\eps$ is $C\eps^{-1}$-Lipschitz continuous, where $C$ is a constant depending only on the doubling constant $c_D$, $0\leq \varphi_i^\eps\leq 1$ for all $i$, $\varphi_i^\eps(x) = 0$ for $x\in X\setminus B_i^{2\eps}$ for every $i$, and $\sum_i\varphi_i^\eps(x) =1$ for all $x\in X$. The collection $(\varphi_i^\eps)_i$ forms a partition of unity of $X$ with respect to $\{B_i^\eps\}$. We refer the reader to \cite{HKT}, and the references therein, for these properties of  $\varphi_i^\eps$. Let us then define
\[
u_\eps(x) = \sum_{i=1}^\infty  u_{B_i^\eps}\varphi_i^\eps(x)
\]
for every $x\in X$. The function $u_\eps$ is locally Lipschitz continuous, since it is locally a finite sum of Lipschitz functions.
Furthermore, the sequence $(u_\eps)_\eps$ converges to $u$ in $L^1(X)$ as $\eps\to 0$ \cite[Lemma 5.3.(2)]{HKT}. 

We need an estimate on
the Lipschitz constant of $u_\eps$. Suppose that $x,y\in B_i^\eps$ and let $J=\{j\colon B_i^{2\eps}\cap B_j^{2\eps} \neq \emptyset\}$. By the doubling property, balls $B_i^\eps$ have bounded overlap and so the cardinality of $J$ is bounded by a constant depending only 
on the doubling constant $c_D$, that is $\# J\leq c_\#$ with $c_\#=c_\#(c_D)$.
In addition $B_j^\eps \subset B_i^{6\eps}$. By the properties of the partition of unity, 
we have exactly as in \cite{HKT} that for any $x,y\in B_i^\eps$
\begin{align*}
\vert u_\eps(x)- u_\eps(y)\vert & 
 = \left|\sum_{j\in J}(u_{{B_j^\eps}}-u_{{B_i^\eps}})(\varphi_j^\eps(x)-\varphi_j^\eps(y))\right| \\
& \leq C\frac{d(x,y)}{\eps}\sum_{j\in J} |u_{{B^\eps_i}}-u_{{B^\eps_j}}|.
\end{align*}
For every $j\in J$ we also have that
\begin{align*}
|u_{B^\eps_i}-u_{B^\eps_j}|\leq&
\frac{1}{\mu(B^\eps_i)\mu(B^\eps_j)} \int_{B^\eps_i}
\int_{B^\eps_j} |u(y)-u(x)|\, d\mu(y) d\mu(x)  \\
\leq&
\frac{1}{\mu(B^\eps_i)\mu(B^\eps_j)} \int_{B^\eps_i} \int_{B_{6\eps}(x)} |u(y)-u(x)|\, d\mu(y)\, d\mu(x)  \\
\leq &\frac{\varepsilon C}{\mu(B^\eps_i)} \mu_{6\eps}(B^\eps_i),
\end{align*}
where in the last inequality we have used the fact that $B_j^\eps\subset B_{6\eps}(x)$ whenever $x\in B^\eps_i$. In other words, we have proved that 
\[
{\rm lip}(u_\eps)(x)\leq C\frac{c_\#}{\mu(B^\eps_i)}\mu_{6\eps}(B_i^\eps)
\]
for every $x\in B_i^\eps$. Here, $\mu_{6\eps}$ is as in~\eqref{eq:meas-def}. We therefore obtain
\begin{align*}
\int_X {\rm lip}(u_\eps)(x)\, d\mu(x)  & \leq
\sum_{i\in \N} \int_{B^\eps_i} {\rm lip}(u_\eps)(x)\, d\mu(x) \leq 
C\sum_{i=1}^\infty c_\#  \mu_{6\eps}(B_i^\eps) \\
& \leq C\mu_{6\eps}(X).
\end{align*}
Now, by the assumption and the doubling property of the measure $\mu$, we have by \eqref{eq:Rev}
\begin{align*}
\liminf_{\eps\to 0^+} \mu_\eps(X) & =
\liminf_{\eps\to 0^+} \int_X \tilde u_{B_\eps}(x)\,d\mu(x) \\
& \le C \liminf_{\eps\to 0^+}\frac{1}{\eps}
\int_{\Delta_\eps}\frac{|u(x)-u(y)|}{\sqrt{
\mu(B_\eps(x))}{\sqrt{\mu(B_\eps(y))}}}
d\mu(y,x) <\infty,
\end{align*}
and hence we can find a sequence $\{\eps_j\}_j$ going to $0$ such that 
\[
\sup_{j\in \N}\mu_{\eps_j}(X)<\infty, 
\]
and then the sequence of Lipschitz functions $u_j=u_{\eps_j/6}$ converges to $u$ in $L^1(X)$ and 
\begin{align*}
\Vert Du\Vert(X) & \leq \limsup_{j\to\infty} \int_X {\rm lip}(u_j)(x)\,d\mu(x) \\
& \le C \liminf_{\eps\to 0^+}\frac{1}{\eps}
\int_{\Delta_\eps}\frac{|u(x)-u(y)|}{\sqrt{
\mu(B_\eps(x))}{\sqrt{\mu(B_\eps(y))}}}
\,d\mu(y,x) <\infty,
\end{align*}
which implies that $u\in {\rm BV}(X)$.
\end{proof} 

\section{Sets of finite perimeter: A Ledoux type characterization}
\label{sect:Ledouxchar}

We shall make use of Theorem~\ref{charBVDelta} to connect the sets of finite perimeter to the theory of heat semigroup in the sense of Ledoux~\cite{L}. The reader could also consult \cite{P}. 

The Ledoux characterization in the limit of \eqref{eq:Ledouxlimit} requires global information of the decay of the heat extension. So if the space is hyperbolic then the behavior of $E$ and $\R^n\setminus E$ far away from $\partial E$ also might play  a role in the limit given in \eqref{eq:Ledouxlimit}. We therefore modify the criterion and consider only regions near $\partial E$.

\begin{theorem} \label{charLedoux}
Let $E\subset X$ be $\mu$-measurable and assume that $E$ has finite measure. Then we have the following.
\begin{enumerate}
\item If 
\begin{equation} \label{eq:Lelim}
   \liminf_{t\to 0^+} \frac{1}{\sqrt{t}}\int_{E^{\sqrt{t}}\setminus E}T_t\chi_{E}(x)\, d\mu(x)<\infty,
\end{equation}
then $E$ is of finite perimeter.
\item If $E$ is of finite perimeter and satisfies the inequality 
\begin{equation} \label{eq:Extra}
 \liminf_{r\to 0} \frac{\mu\left(\bigcup_{x\in \partial E}B_r(x)\right)}{r}\le CP(E),
\end{equation}
then $E$ satisfies~\eqref{eq:Lelim}.
\end{enumerate}
Here, $E^{\sqrt{t}}$ is the tubular neighborhood
\[
E^{\sqrt{t}} = \bigcup_{x\in E}B_{\sqrt{t}}(x).
\]
Furthermore, every set $E$ of finite perimeter can be approximated in the relaxed sense by open sets of finite perimeter that
satisfy inequality~\eqref{eq:Extra}, that is, we can find a sequence of sets $E_k$ of finite perimeter satisfying~\eqref{eq:Extra}
such that $\chi_{E_k}\to\chi_E$ in $L^1(X)$ and 
\[
 P(E)\le \lim_{k\to\infty} \liminf_{t\to 0^+} \frac{1}{\sqrt{t}}\int_{E^{\sqrt{t}}\setminus E}T_t\chi_{E_k}(x)\, d\mu(x)
   \le CP(E).
\]
\end{theorem}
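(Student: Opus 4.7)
The plan is to use Theorem~\ref{charBVDelta} as a bridge: the two-sided heat-kernel estimates \eqref{eq:KEL}--\eqref{eq:KEU} together with doubling let one replace $p(t,x,y)$ by a constant multiple of $1/\sqrt{\mu(B_{\sqrt t}(x))\mu(B_{\sqrt t}(y))}$ on the near-diagonal strip $\Delta^{\sqrt t}$, so that setting $\eps=\sqrt t$ converts the heat integral into a piece of the Korevaar--Schoen energy that Theorem~\ref{charBVDelta} characterizes as ${\rm BV}$.

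For part (1), I would discard all pairs with $d(x,y)\ge\sqrt t$. For $x\in E^{\sqrt t}\setminus E$, the lower bound \eqref{eq:KEL} combined with $\mu(B_{\sqrt t}(y))\asymp\mu(B_{\sqrt t}(x))$ from doubling yields
\[
T_t\chi_E(x)\ \geq\ \int_{E\cap B_{\sqrt t}(x)}\!p(t,x,y)\,d\mu(y)\ \geq\ c\int_{E\cap B_{\sqrt t}(x)}\!\frac{d\mu(y)}{\sqrt{\mu(B_{\sqrt t}(x))\mu(B_{\sqrt t}(y))}}.
\]
Since $|\chi_E(x)-\chi_E(y)|=1$ on the cross strip $(E^{\sqrt t}\setminus E)\times(E\cap B_{\sqrt t}(x))$, Fubini and the symmetry $(x,y)\leftrightarrow(y,x)$ reconstruct one half of the near-diagonal energy of $\chi_E$ at scale $\sqrt t$; then \eqref{eq:Lelim} and Theorem~\ref{charBVDelta} give $\chi_E\in{\rm BV}(X)$, so $P(E)<\infty$.

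For part (2), the Markovian bound $T_t\chi_E\le 1$ yields $\int_{E^{\sqrt t}\setminus E}T_t\chi_E\,d\mu\le\mu(E^{\sqrt t}\setminus E)$. A short quasi-convexity argument (exploiting that the $1$-Poincar\'e inequality makes $X$ quasi-convex) gives $E^{\sqrt t}\setminus E\subset\bigcup_{z\in\partial E}B_{C\sqrt t}(z)$: one joins $x\in E^{\sqrt t}\setminus E$ to a nearby $y\in E$ by a curve of length $\lesssim\sqrt t$ and follows it to its first meeting with $E$. Dividing by $\sqrt t$ and invoking \eqref{eq:Extra} closes (2).

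For part (3), I would construct $E_k$ by Lipschitz truncation. From the definition of ${\rm BV}$ one picks locally Lipschitz $u_k\colon X\to[0,1]$ with $u_k\to\chi_E$ in $L^1(X)$ and $\int_X g_{u_k}\,d\mu\to P(E)$. Using the co-area formula \eqref{eq:coarea}, the Fubini identity $\int_0^1\|\chi_{\{u_k>s\}}-\chi_E\|_{L^1(X)}\,ds=\|u_k-\chi_E\|_{L^1(X)}$, and \eqref{Mink-Per} (which supplies a full-measure set of levels $s$ where $\{u_k>s\}$ satisfies \eqref{eq:Extra}), one selects $s_k\in(0,1)$ so that $E_k:=\{u_k>s_k\}$ is open, satisfies \eqref{eq:Extra}, obeys $P(E_k)\le 2P(E)$, and $\chi_{E_k}\to\chi_E$ in $L^1(X)$. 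For the right-hand inequality I would apply part (2) to $E_k$ and transfer the estimate from the region $E_k^{\sqrt t}\setminus E_k$ to $E^{\sqrt t}\setminus E$ via
\[
E^{\sqrt t}\setminus E\ \subset\ (E_k^{\sqrt t}\setminus E_k)\cup(E_k\setminus E)\cup(E\setminus E_k)^{\sqrt t},
\]
the last two pieces being absorbed by \eqref{eq:Extra} for $E_k$ and the smallness of $\mu(E\triangle E_k)$. For the left-hand inequality I would rerun the argument of part (1) with $\chi_{E_k}$ in place of $\chi_E$, pass to the $\chi_E$ energy via $L^1$-closeness, and close with Theorem~\ref{charBVDelta} plus lower semicontinuity of the perimeter.

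The main obstacle is the mismatch in the displayed chain between the fixed region $E^{\sqrt t}\setminus E$---which refers to $E$ and need not satisfy \eqref{eq:Extra}---and the varying integrand $T_t\chi_{E_k}$. Delivering the clean constant $CP(E)$ on the right forces one to calibrate the Lipschitz level $s_k$ to the rate at which $t\to 0$, so that the symmetric-difference contributions decay faster than the $1/\sqrt t$ normalization blows up; this coupling is where \eqref{Mink-Per} together with the co-area choice of levels does the heavy lifting.
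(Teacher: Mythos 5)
Your proposal is correct and follows the paper's strategy in parts~(1) and~(3), but your part~(2) argument is genuinely different from the paper's, and in fact simpler. The paper covers $E^{\sqrt t}\setminus E$ by balls $B_k=B_{2\sqrt t}(x_k)$ with $x_k\in\partial E$ of bounded overlap, then splits each contribution into a near piece $I_k^1$ (from $E\cap B_k$) and a far piece $I_k^2$ (from $E\setminus B_k$). The near piece is estimated by the upper Gaussian bound \eqref{eq:KEU} plus the relative isoperimetric inequality \eqref{eq:IPI}, giving $\frac{1}{\sqrt t}\sum_k I_k^1\le CP(E)$ uniformly in $t$ with no appeal to \eqref{eq:Extra}; the far piece is estimated by a dyadic annulus decomposition and exponential decay, giving $\frac{1}{\sqrt t}\sum_k I_k^2\le C\mu(E^{\sqrt t}\setminus E)/\sqrt t$, and only here is \eqref{eq:Extra} used. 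Your argument collapses this entirely: the Markovian property gives $T_t\chi_E\le 1$ so $\int_{E^{\sqrt t}\setminus E}T_t\chi_E\,d\mu\le\mu(E^{\sqrt t}\setminus E)$, and the quasi-convexity of $X$ (a consequence of doubling plus the $1$-Poincar\'e inequality) plus a standard first-crossing argument shows $E^{\sqrt t}\setminus E\subset\bigcup_{z\in\partial E}B_{C\sqrt t}(z)$, after which \eqref{eq:Extra} finishes the job after rescaling $r=C\sqrt t$. This is correct, avoids the two-sided heat-kernel estimates entirely, and does not even require $E$ to be open (the first-crossing argument lands on the topological boundary of any measurable set), whereas the paper's proof assumes openness. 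What you lose relative to the paper's decomposition is the finer information, emphasized in the remark that follows the proof, that the near-diagonal contribution is bounded by $CP(E)$ for \emph{every} open set of finite perimeter and only the far contribution requires \eqref{eq:Extra}; this refinement is not needed for the stated theorem. For part~(3), you and the paper both reduce to the Bobkov--Houdr\'e construction via \eqref{Mink-Per} and the co-area choice of levels; you additionally flag, correctly, the mismatch in the displayed chain between the fixed domain $E^{\sqrt t}\setminus E$ and the varying integrand $T_t\chi_{E_k}$, and sketch a triangle-inequality bridge, where the paper simply cites \cite{BH} and \cite{KKST} without comment.
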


We remark that unlike in the case of Theorem 3.1, see Remark~\ref{remark3.1}, here we do not have 
equivalence between the expression in
(4.1) and the perimeter $P(E)$ of $E$. We only obtain that 
\[ 
P(E)\le C \liminf_{t\to 0^+} \frac{1}{\sqrt{t}} \int_{E^{\sqrt{t}}\setminus
E} T_t \chi_E(x)\, d\mu(x).
\]

\begin{proof}[Proof of Theorem~\ref{charLedoux}]
Note first that 
\begin{align*}
\int_X\vert T_t & \chi_E(x)-\chi_E(x)\vert\, d\mu(x) \\
=&  \int_{X}\left\vert\int_X p(t,x,y)( \chi_E(y)-\chi_E(x))\,d\mu(y)\right\vert\, d\mu(x) \\
=& \int_E\left\vert -\int_{X\setminus E} p(t,x,y)\, d\mu(y)\right\vert\, d\mu(x) \\
& \qquad \quad +\int_{X\setminus E}\left\vert\int_E p(t,x,y)\, d\mu(y)\right\vert\, d\mu(x) \\
= & 2\int_E\int_{X\setminus E}p(t,x,y)\, d\mu(x) d\mu(y) \\ 
=& 2\int_{X}\int_{X}\chi_{X\setminus E}(x)p(t,x,y)\chi_E(y)\, d\mu(x) d\mu(y) \\  
      =&2\int_{X\setminus E}T_t\chi_E(x)\,d\mu(x).
\end{align*}
Hence by the fact that $T_tf\to f$ in $L^1(X)$ as $t\to 0$ for each $f\in L^1(X)$, we have that
\[
    \lim_{t\to 0^+} \int_{X\setminus E}T_t\chi_E(x)\,d\mu(x)=0.
\]
Suppose now that in addition, 
\[
\liminf_{t\to0^+}\frac{1}{\sqrt{t}}\int_{E^{\sqrt{t}}\setminus E}T_t\chi_E(x)\, d\mu(x) <\infty.
\]
We want to show that the set $E$ has finite perimeter. By the symmetry of the heat kernel
\begin{align*}
2\int_E\int_{E^{\sqrt{t}}\setminus E} & p(t,x,y)\, d\mu(y)\, d\mu(x)
  \geq \int_E \int_{B_{\sqrt{t}}(x)\setminus E} p(t,x,y)\, d\mu(y)d\mu(x) \\
           &\qquad \qquad +\int_{E^{\sqrt{t}}\setminus E}\int_{B_{\sqrt{t}}(x)\cap E}p(t,x,y)\, d\mu(y)\, d\mu(x)\\
  &= \int_{\Delta^{\sqrt{t}}}\, p(t,x,y)|\chi_E(y)-\chi_E(x)| \, d\mu\times \mu(y,x).
\end{align*}
To obtain the last equality above, we used
the fact that $|\chi_E(x)-\chi_E(y)|=1$ precisely
when
\[
(x,y) \in E\times (X\setminus E)\cup (X\setminus E)\times E,
\] 
and zero otherwise.

By estimate \eqref{eq:KEL} for the kernel $p(t,x,y)$,
\begin{align*}
 \frac{1}{\sqrt{t}} \int_E\int_{E^{\sqrt{t}}\setminus E} & p(t,x,y)\, d\mu(y)d\mu(x) \\
 & \quad \geq \frac{C}{\sqrt{t}} 
       \int_{\Delta^{\sqrt{t}}\cap[(E^{\sqrt{t}}\cap E)\times (E^{\sqrt{t}}\setminus E)]}\frac{|\chi_E(x)-\chi_E(y)|}{\sqrt{\mu(B_{\sqrt{t}}(x))}\sqrt{\mu(B_{\sqrt{t}}(y))}}\, d\mu(y)d\mu(x) \\
    & \quad = \frac{C}{\sqrt{t}} 
       \int_{\Delta^{\sqrt{t}}}\frac{|\chi_E(x)-\chi_E(y)|}{\sqrt{\mu(B_{\sqrt{t}}(x))}\sqrt{\mu(B_{\sqrt{t}}(y))}}\, d\mu(y)d\mu(x),
\end{align*}
and so we have 
\[
  \liminf_{t\to 0^+}\frac{1}{\sqrt{t}}
    \int_{\Delta^{\sqrt{t}}}\frac{|\chi_E(x)-\chi_E(y)|}{\sqrt{\mu(B_{\sqrt{t}}(x))}\sqrt{\mu(B_{\sqrt{t}}(y))}}\, d\mu(y)d\mu(x)<\infty.
\]
By Theorem~\ref{charBVDelta},  we conclude that $\chi_E\in {\rm BV}(X)$. This completes the proof of statement~(1) of the theorem.

\smallskip

To prove statement~(2) of the theorem, suppose that $E\subset X$ is open and of finite perimeter, with
$\mu(E)$ finite, and that~\eqref{eq:Extra} is satisfied by $E$.

For each $t>0$ let 
\[
E^{\sqrt{t}}\setminus E \subset \bigcup_{k\ge 0}B_k,
\]
where $B_k := B_{2\sqrt{t}}(x_k)$ with $x_k\in \partial E$ 
such that the dilated balls $2\lambda B_k$ have bounded overlap~\eqref{eq:BoundedOverlap}, the
bound $c_O$ of the overlap depending solely on the doubling constant
of the measure $\mu$. For each $k\ge 0$, we also write $E=E_k^1\cup E_k^2$, where
\[
 E_k^1 = E\cap B_k, \quad E_k^2=E\setminus B_k. 
\]
Then
\begin{align*}
\frac{1}{\sqrt{t}}\int_{E^{\sqrt{t}}\setminus E} T_t\chi_E(x)\, d\mu(x) & = \frac{1}{\sqrt{t}}\int_E\int_{E^{\sqrt{t}}\setminus E}p(t,x,y)\, d\mu(y)d\mu(x) \\
& \leq \frac{1}{\sqrt{t}}\sum_{k=0}^\infty\int_E\int_{B_k\setminus E}p(t,x,y)\, d\mu(y)d\mu(x) \\
& \leq \frac{1}{\sqrt{t}}\sum_{k=0}^\infty\left(\int_{E_k^1}\int_{B_k \setminus E}p(t,x,y)\, d\mu(y)d\mu(x) \right.\\
& \qquad \qquad \left. + \int_{E_k^2}\int_{B_k\setminus E}p(t,x,y)\, d\mu(y)d\mu(x) \right) \\
& =: \frac{1}{\sqrt{t}}\sum_{k=0}^\infty(I_k^1 + I_k^2).
\end{align*}
Let us estimate the preceding terms separately, starting with the term $I_k^1$. 
By~\eqref{eq:KEU}, the doubling condition and \eqref{eq:doublingest}, and finally by \eqref{eq:IPI}, we have 
\begin{align*}
I_k^1 & = \int_{E\cap B_k}\int_{B_k\setminus E}p(t,x,y)\, d\mu(y)d\mu(x) \leq \int_{E\cap B_k}\int_{B_k\setminus E}\frac{C}{\mu(B_{\sqrt{t}}(x))}\,d\mu(y)d\mu(x) \\
& \leq  C\frac{\mu(B_k\setminus E)}{\mu(B_k)}\mu(E\cap B_k) \leq C\sqrt{t} P(E,2\lambda B_k).
\end{align*}
%
Therefore, we may conclude that
\[
  \frac{1}{\sqrt{t}}\sum_{k=0}^\infty I_k^1\le C P(E).
\]
We point out here again that $C$ represents constants that depend only
on the data of $X$ and whose particular value we do not care about,
and that the value of $C$ could change even within a line.

The second term $I_k^2$ is treated as follows.  We set 
\[
E_k^2 = \bigcup_{j\geq 1}[\overline{B}_{2^{j+1}\sqrt{t}}(x_k)\setminus B_{2^{j}\sqrt{t}}(x_k)]\cap E =: \bigcup_{j\geq 0}A_{k}^j.
\] 
By~\eqref{eq:KEU} we have
\begin{align*}
 I_k^2&=\int_{E\setminus B_k}\int_{B_k\setminus E}p(t,x,y)\, d\mu(y)d\mu(x) \\
      &\le C\int_{B_k\setminus E}\, \sum_{j=1}^\infty 
             \int_{A_k^j}\frac{e^{-C4^j}}{\sqrt{\mu(B_k)}\sqrt{\mu(B_{\sqrt{t}}(y))}}\, d\mu(y)d\mu(x)\\
       &\le C \frac{\mu(B_k\setminus E)}{\sqrt{\mu(B_k)}}\sum_{j=1}^\infty 
             \int_{A_k^j}\frac{e^{-C4^j}}{\sqrt{\mu(B_{\sqrt{t}}(y))}}\, d\mu(y).
\end{align*}
By~\eqref{eq:doublingest} we know that whenever $y\in B_{2^{j+1}\sqrt{t}}(x_k)$,
\[
  \frac{\mu(B_{\sqrt{t}}(y))}{\mu(B_{2^{j+1}\sqrt{t}}(x_k))}\ge C\left(\frac{\sqrt{t}}{2^{j+1}\sqrt{t}}\right)^{q_\mu}=C2^{-jq_\mu}.
\]
Therefore,
\begin{align*}
 I_k^2&\le C\, \frac{\mu(B_k\setminus E)}{\sqrt{\mu(B_k)}}\sum_{j=1}^\infty
                  \int_{A_k^j}\frac{e^{-C4^j}2^{jq_\mu/2}}{\sqrt{\mu(B_{2^{j+1}\sqrt{t}}(x_k))}}\, d\mu(y)\\
          &\le C\, \frac{\mu(B_k\setminus E)}{\sqrt{\mu(B_k)}}\sum_{j=1}^\infty
                  \frac{e^{-C4^j}2^{jq_\mu/2}}{\sqrt{\mu(B_{2^{j+1}\sqrt{t}}(x_k))}} \mu(A_k^j)\\
         &\le C\, \frac{\mu(B_k\setminus E)}{\sqrt{\mu(B_k)}}\sum_{j=1}^\infty
                  e^{-C4^j}2^{jq_\mu/2}\sqrt{\mu(B_{2^{j+1}\sqrt{t}}(x_k))}. 
\end{align*}
By~\eqref{eq:doublingest},
\[
  \mu(B_{2^{j+1}\sqrt{t}}(x_k))\le C2^{jq_\mu}\mu(B_{\sqrt{t}}(x_k))=C2^{jq_\mu} \mu(B_k).
\]
It follows that
\[
 I_k^2\le C\frac{\mu(B_k\setminus E)}{\sqrt{\mu(B_k)}}\sum_{j=0}^\infty
                  e^{-C4^j}2^{jq_\mu}\sqrt{\mu(B_k)}
            \le C\mu(B_k\setminus E).
\]
Therefore, by the bounded overlap of the balls $B_k$,
\[
  \frac{1}{\sqrt{t}}\sum_{k=0}^{\infty} I_k^2\le C \frac{\mu(E^{\sqrt{t}}\setminus E)}{\sqrt{t}},
\]
and the fact that~\eqref{eq:Extra} holds for $E$, completes the proof of the 
statement~(2) of the theorem.

Finally, the last claim of the theorem follows from the proof of~\cite[Theorem~1.1]{BH} (see 
Section~3 of~\cite{BH}), see also \cite{KKST}. This completes the proof of the theorem.
\end{proof}

\begin{remark}
The  estimates of $\sum_{k=0}^\infty I_k^1$ and $\sum_{k=0}^\infty I_k^2$ hold also for all open 
sets $E$ of finite perimeter. However, while for all open sets $E$ of finite perimeter we do have that
\[
\liminf_{t\to 0^+}\frac{1}{\sqrt{t}}\sum_{k=0}^\infty I_k^1 \leq CP(E) < \infty,
\]
to know that
\[
\liminf_{t\to 0^+}\frac{1}{\sqrt{t}}\sum_{k=0}^\infty I_k^2 \leq CP(E),
\]
we need equation \eqref{eq:Extra}. Hence, our requirements for the set $E$ in claim~(2) of the theorem. For example, in $\R^2$ the set
\[
E = \bigcup_{k\geq 0}B_{2^{-k}}(q_k)
\]
with $\{q_k\}_{k\geq 0}$ an enumeration of $\mathbb{Q}\times\mathbb{Q}$ is an open set of finite perimeter, but $\mu(E^{\sqrt{t}}\setminus E)=\infty$ for all $t>0$.

This may merely be an artifact of our method of proof. Note that in $\mathbb{R}^n$, an alternative proof due to Bakry shows that the additional requirement \eqref{eq:Extra} is not needed to obtain the Ledoux-type characterization \eqref{eq:Lelim} of sets of finite perimeter. We do not know how to adapt the proof of Bakry in 
such a generality considered in our paper.
\end{remark}

\section{Total variation: De Giorgi characterization under Bakry--\'Emery condition}
\label{sect:DG}

We close this paper by proving a metric space version of the De~Giorgi 
characterization \cite{DeG} of the total variation of a ${\rm BV}$ function. Our approach is based 
on recent works by Amborsio, Gigli, and Savar\'e \cite{AGS} and is
essentially a consequence of \cite{S}. 

We refer to \cite{CM, MP, GP, BMP} for the De~Giorgi characterization
on Riemannian manifolds and on Carnot groups, and to \cite{AMPP} for
the same result but with the semigroup of a rather general second
order elliptic operator on domains in Euclidean spaces. All of these
results require a condition on the curvature of the spaces; these
conditions are related to the Ricci curvature in the case of
Riemannian manifolds. In \cite{MP} the requirement was on a lower
bound of Ricci curvature plus a technical requirement on a lower bound
on the volume of balls. It was pointed out in~\cite{CM} that, thanks
to the works of Bakry and \'Emery (see for instance \cite{BE}), the
same result can be obtained only with a lower bound condition on the
Ricci curvature. In \cite{GP} it has been shown that one can also
require that the Ricci tensor can be split into two parts, one bounded
below and one belonging to a Kato class. Also in this case the
De~Giorgi characterization of the total variation holds true.

We recall the definition of $\BE(K,\infty)$ condition as formulated in~\cite{BE, Ba}.

\begin{definition}[Bakry--\'Emery condition] \label{def:BE}
The Dirichlet form $\E$ given in Section~\ref{sect:Semi} in relation to the Cheeger differentiable structure is said to satisfy the 
$\BE(K,\infty)$ condition, $K\in \R$, if 
for every $f\in D(A)$ such that $Af\in N^{1,2}(X)$, we have 
\begin{equation}\label{BE1}
\frac{1}{2} \int_X |Df|^2 A\varphi \, d\mu -\int_X \varphi  Df \cdot D Af \, d\mu 
\geq K \int_X \varphi |Df|^2\, d\mu,
\end{equation}
whenever $\varphi\in D(A)\cap L^\infty(X)$ is nonnegative such that $A\varphi\in L^\infty(X)$.
\end{definition}

 We remind the reader about the connections between the $\BE(K,\infty)$ condition and the 
doubling property and a Poincar\'e inequality assumed in the previous sections. It is known that a 
$1$-Poincar\'e inequality follows from the $\BE(K,\infty)$ condition, or even from weaker curvature 
conditions like the $\CD(K,\infty)$ condition of Lott--Sturm--Villani. However, a complete 
separable metric space endowed with a probability measure and satisfying the $\BE(K,\infty)$ 
condition need not be doubling.

In the appendix, we recall some consequences of the Bakry--\'Emery condition. These 
consequences are needed in the proof of the main result of this section, Proposition~\ref{DGchar}, and 
have been investigated for instance in~\cite{S}. The setting in~\cite{S} is rather general, and hence for the convenience of
the reader we provide sketches of simplified proofs of these results in the appendix. 

The main result of this section is the following. 

\begin{proposition} \label{DGchar}
Let $u\in L^1(X)$. Suppose that
\begin{equation} \label{eq:Limsup}
\limsup_{t\to 0^+}\int_X |DT_t u|\, d\mu < \infty,
\end{equation}
then $u\in {\rm BV}(X)$. Here, by assuming that $u$ satisfies \eqref{eq:Limsup}, we are
also implicitly assuming that $T_t u \in N^{1,1}(X)$ for each $t>0$. On the other hand, whenever \eqref{eq:Limsup} holds and if a Dirichlet form $\E$ compatible with the differentiable structure satisfies the $\BE(K,\infty)$ condition, we have 
\[
\|D_c u\|(X) = \lim_{t\to 0^+}\int_X |DT_tu|\, d\mu.
\]
\end{proposition}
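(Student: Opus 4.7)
The proof breaks naturally into two independent parts corresponding to the two statements of the proposition. Throughout, the plan relies on two facts about the heat semigroup: $T_t u\to u$ in $L^1(X)$ as $t\to 0^+$, which follows from Remark~\ref{rem:propHeat}(10), and the mass preservation identity $\int_X T_t f\,d\mu=\int_X f\,d\mu$, which follows from self-adjointness (item (2)) combined with stochastic completeness (item (9)).

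First, to establish $u\in{\rm BV}(X)$ under the hypothesis \eqref{eq:Limsup}, the plan is a standard lower semicontinuity argument. For each $t>0$, the heat kernel bound \eqref{eq:KEU} together with the doubling and Poincar\'e assumptions ensures that $T_t u$ admits a Cheeger derivative $|DT_tu|$ and that the Cheeger total variation of $T_tu$ coincides with $\int_X|DT_tu|\,d\mu$. Since $T_tu\to u$ in $L^1(X)$, the lower semicontinuity of the Cheeger total variation (which follows from its definition as an infimum over relaxations by locally Lipschitz approximants) gives
\[
\|D_c u\|(X)\le \liminf_{t\to 0^+}\int_X|DT_tu|\,d\mu\le \limsup_{t\to 0^+}\int_X|DT_tu|\,d\mu<\infty,
\]
so $u\in{\rm BV}(X)$. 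This inequality also supplies the ``$\le$'' direction in the desired equality, without invoking the Bakry--\'Emery condition.

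For the reverse inequality, the plan is to transport the pointwise gradient contraction from Lipschitz test functions to a general BV function by approximation. Choose Lipschitz $u_k\in L^1(X)$ with $u_k\to u$ in $L^1(X)$ and $\int_X g_{u_k}\,d\mu\to \|D_c u\|(X)$, which is possible by the definition of $\|D_c u\|$. The consequences of $\BE(K,\infty)$ recalled in the appendix yield, for each sufficiently regular $f$, the $L^1$-type gradient estimate $|DT_t f|\le e^{-Kt}\,T_t|Df|$ pointwise $\mu$-a.e.; apply this to $f=u_k$, integrate against $\mu$, and use mass preservation to obtain
\[
\int_X|DT_t u_k|\,d\mu\le e^{-Kt}\int_X|Du_k|\,d\mu.
\]
Since $T_t$ is an $L^1$-contraction, $T_tu_k\to T_tu$ in $L^1(X)$ as $k\to\infty$ for each fixed $t>0$, and lower semicontinuity of the Cheeger variation gives
\[
\int_X|DT_tu|\,d\mu\le \liminf_{k\to\infty}\int_X|DT_tu_k|\,d\mu \le e^{-Kt}\,\|D_c u\|(X).
\]
Letting $t\to 0^+$ yields $\limsup_{t\to 0^+}\int_X|DT_tu|\,d\mu\le \|D_c u\|(X)$, and combining with the first paragraph gives the claimed equality.

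The main obstacle is the pointwise gradient contraction $|DT_t f|\le e^{-Kt}\,T_t|Df|$, which is the genuine content of the $\BE(K,\infty)$ condition in this setting. The $L^2$ version $|DT_t f|^2\le e^{-2Kt}\,T_t(|Df|^2)$ comes directly from differentiating $s\mapsto e^{2Ks}T_s(|DT_{t-s}f|^2)$ using \eqref{BE1}, but upgrading to the $L^1$ version requires Savar\'e's self-improvement argument, whose adaptation to the present metric framework is precisely what the appendix is meant to supply; a secondary technical point is justifying that $T_tu$ has a Cheeger derivative in $L^1$ when $u\in L^1(X)$ only, which follows from the Gaussian heat kernel bounds \eqref{eq:KEL}--\eqref{eq:KEU} combined with standard local $N^{1,2}$-regularity of the heat flow.
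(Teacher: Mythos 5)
Your proposal is correct and follows essentially the same route as the paper's own proof: lower semicontinuity of the Cheeger variation together with $T_tu\to u$ in $L^1(X)$ gives the lower bound, and the Lipschitz approximation combined with the self-improved $\BE(K,\infty)$ gradient contraction (Proposition~\ref{prop:SelfImp}), $L^1$-contractivity/mass preservation of $T_t$, and a second application of lower semicontinuity give the matching upper bound. The only cosmetic difference is that you phrase the key contraction pointwise $\mu$-a.e., $|DT_tf|\le e^{-Kt}T_t|Df|$, while the paper states the equivalent integrated form against test functions $\varphi\in{\rm Lip}_c(X)$; both are instances of the same appendix lemma.
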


By the self-improvement property of the Bakry-\'Emery condition $\BE(K,\infty)$, we can easily prove Proposition~\ref{DGchar}. Of course, it would be interesting to obtain a similar result, or even the weaker version as in \cite{BMP}, without imposing the Bakry--\'Emery condition on $\E$.

\begin{proof}[Proof of Proposition~\ref{DGchar}]
The claim that ~\eqref{eq:Limsup} implies that $u\in{\rm BV}(X)$ follows immediately from the definition of ${\rm BV}(X)$ upon noticing that as $u\in L^1(X)$ we must have $T_tu\to u$ in $L^1(X)$ as $t\to 0$
(see Remark~\ref{rem:propHeat}(10)) and $T_tu\in N^{1,1}(X)$. Furthermore,
it is also immediate that
\[
  \|D_cu\|(X)\le \limsup_{t\to 0^+}\int_X|DT_tu|\, d\mu.
\]

Let $u\in {\rm BV}(X)$. We consider a sequence of Lipschitz functions $(f_j)_{j\in \N}\subset {\rm Lip}_c(X)$ such that $f_j \to u$ in $L^1(X)$ and
\[
\|D_c u\|(X) = \lim_{j\to\infty} \int_X |Df_j|\, d\mu.
\]
By the lower semicontinuity of the total variation together with Proposition~\ref{prop:SelfImp} and the fact that $(T_t)_{t>0}$ is a contraction semigroup on $L^1(X)$ (see Remark~\ref{rem:propHeat}(10)), 
we conclude
\begin{align*}
\|D_c u\|(X)\leq &
\liminf_{t\to 0^+} \int_X |DT_t u|\,d\mu 
\leq
\liminf_{t\to 0^+}\liminf_{j\to\infty} \int_X |DT_t f_j|\,d\mu \\
&\leq
\liminf_{t\to 0^+}\liminf_{j\to\infty} e^{-Kt} \int_X T_t |Df_j|\,d\mu \\
&\leq
\liminf_{t\to 0^+}\liminf_{j\to\infty} e^{-Kt} \int_X |Df_j|\,d\mu 
= \|D_c u\|(X),
\end{align*}
and hence the proof is complete.
\end{proof}

In conclusion, we note that by the analog of both the De Giorgi characterization and the Ledoux characterization
of functions of bounded variation demonstrated in this paper, even in the general metric measure space
setting (with the measure doubling and supporting a $1$-Poincar\'e inequality), the behavior of 
sets of finite perimeter is intimately connected to the heat semigroup.

\section{Appendix}
\label{sect:Appendix}

In this appendix we gather together properties associated with the Bakry--\'Emery condition in Section~\ref{sect:DG}. In the metric setting a related condition, guaranteed by the logarithmic Sobolev inequality, was first studied in \cite{KRS}.
The results in this appendix are from \cite{S} adapted to our purposes. Given that we have access to the Cheeger differential structure related to the Dirichlet
form, some of the arguments in \cite{S} are simplified in our setting.

In the sequel, the following Pazy convolution operator will play an important role:
\[
\beta_\eps f=\frac{1}{\eps} \int_0^\infty T_s f \varrho (s/\eps)\,ds.
\]
Here $\varrho\in C^\infty_c(0,\infty)$ is a nonnegative convolution kernel with
$\int_0^\infty \varrho(s)\,ds=1$. We also define the measure operator $A^*$ by setting
\begin{align*}
D(A^*)=\Big\{ f\in N^{1,2}(X) & \colon \textrm{there exists } \nu\in \mathcal{M}(X) \mbox{ such that } \\
& \quad \E (f,v)=-\int_X v\, d\nu \textrm{ for all } v\in {\rm Lip}_c(X) \Big\},
\end{align*}
and denote by $A^*f$ the measure $\nu$.
Here ${\mathcal M}(X)$ denotes the set of finite Borel measures on $X$. The integration by parts formula
\[
\int_X Df\cdot D\varphi\, d\mu = -\int_X \varphi\, dA^* f
\]
can be extended from functions $\varphi\in {\rm Lip}_c(X)$ to 
functions $\varphi\in N^{1,2}(X)\cap L^\infty(X)$ if on the right hand side
we consider the representative $\tilde \varphi\in N^{1,2}(X)$ of $\varphi$, since $A^*f$ does not charge
sets of zero Sobolev $2$-capacity. We refer to \cite{MMS} for more details. Since $\mu$ is doubling and supports a $2$-Poincar\'e inequality, and because
$X$ is assumed to be complete, it follows that Lipschitz functions with
compact support form a dense subclass of $N^{1,2}(X)$ (see \cite{Sh} for example).

We consider the function class 
\[
\Di =\{ f\in D(A)\cap {\rm Lip}_b(X) \colon Af \in N^{1,2}(X)\};
\]
as we shall see in Proposition \ref{SavResult}, if $f\in \Di$, then $|Df|^2\in D(A^*)$. We shall denote by $\Gamma^*_2(f)$ the measure
\[
\Gamma^*_2(f)=\frac{1}{2} A^*|Df|^2 -(Df\cdot DA f)\, \mu =
\Gamma^\perp_2(f)+ \gamma_2(f)\, \mu,
\]
where $\Gamma^\perp_2(f) \perp \mu$ is the singular part of the measure $\Gamma^*_2(f)$ and $\gamma_2(f)$ its absolute continuous component. 
 
We list in the following proposition the main properties of functions in $\Di$ needed
in the proof of Proposition~\ref{DGchar}. We refer to \cite{S} for the details of the 
proof, but we provide a sketch of the proof for the convenience of the reader.

\begin{proposition}\label{SavResult}
Suppose that $f\in \Di$ and that $\E$ satisfies the $\BE(K,\infty)$ condition. Then
\begin{enumerate}
\item $|Df|^2\in N^{1,2}(X)$;
\item $|Df|^2\in D(A^*)$ and for any nonnegative $\varphi \in {\rm Lip}_c(X)$,
\[
\frac{1}{2}\int_X \varphi d A^*|Df|^2\, d\mu -\int_X \varphi Df\cdot DAf \, d\mu 
\geq K \int_X \varphi |Df|^2\, d\mu;
\]
\item we have
\[
\int_X\varphi |D|Df|^2|^2\, d\mu \leq 4\int_X\varphi\left( \gamma_2(f)-K|Df|^2\right)\, d\mu
\]
for any nonnegative $\varphi \in {\rm Lip}_c(X)$.
\end{enumerate}
\end{proposition}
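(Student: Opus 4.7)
The overall strategy is to verify (1)--(3) first for the Pazy regularizations $f_\eps=\beta_\eps f$ and then pass to the limit $\eps\to 0^+$. The point is that $f_\eps\in\bigcap_{n\ge 1}D(A^n)$, since iterated integration by parts against the mollifier $\varrho$ gives $A^n\beta_\eps f=\eps^{-n}\int_0^\infty T_s f\,\varrho^{(n)}(s/\eps)\,ds$. Combined with the $L^\infty$-contractivity of $T_s$ and with the gradient-commutation estimate available under $\BE(K,\infty)$, this implies that $f_\eps$ is Lipschitz and bounded, that $Af_\eps\in N^{1,2}(X)\cap L^\infty(X)$, and that the Cheeger chain rule already gives $|Df_\eps|^2\in N^{1,2}(X)\cap L^\infty(X)$. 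Moreover $f_\eps\to f$ and $Af_\eps\to Af$ in $N^{1,2}(X)$ as $\eps\to 0^+$.

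To prove (2) for $f_\eps$, I plug $f_\eps$ into the $\BE(K,\infty)$ inequality of Definition~\ref{def:BE} with any nonnegative test function $\varphi\in D(A)\cap L^\infty(X)$ satisfying $A\varphi\in L^\infty(X)$, obtaining
\[
\tfrac12\int_X |Df_\eps|^2 A\varphi\,d\mu-\int_X \varphi\,Df_\eps\cdot DAf_\eps\,d\mu\ge K\int_X\varphi|Df_\eps|^2\,d\mu.
\]
Since $|Df_\eps|^2\in N^{1,2}(X)$, the identity $\int_X |Df_\eps|^2 A\varphi\,d\mu=-\E(|Df_\eps|^2,\varphi)$ together with the integration-by-parts formula from the Appendix shows that the functional $\varphi\mapsto-\E(|Df_\eps|^2,\varphi)$ is representable by a finite Borel measure, i.e.\ $|Df_\eps|^2\in D(A^*)$, and the displayed inequality is precisely (2) for $f_\eps$. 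A weak-$\ast$ compactness argument on the measures $A^*|Df_\eps|^2$, whose total variations are controlled by the inequality itself and by the $N^{1,2}$-bounds on $f_\eps$ and $Af_\eps$, produces a limit measure that one identifies as $A^*|Df|^2$; this simultaneously delivers (1) and (2) for $f$.

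The crux of the argument is the self-improvement (3), which I would establish by Bakry's classical chain-rule optimization applied to $f_\eps$. I substitute $h(f_\eps)$ in place of $f$ in the $\BE(K,\infty)$ inequality, with $h\in C^2(\R)$ drawn from a sufficiently rich family with bounded first and second derivatives, and expand using $|Dh(f_\eps)|^2=h'(f_\eps)^2|Df_\eps|^2$ and $Ah(f_\eps)=h'(f_\eps)Af_\eps+h''(f_\eps)|Df_\eps|^2$. After localization by a nonnegative $\varphi\in{\rm Lip}_c(X)$, the resulting inequality is a pointwise nonnegative quadratic in the free parameter $h''(f_\eps)(x)$; optimizing this quadratic generates an extra term proportional to $|D|Df_\eps|^2|^2$ on the left-hand side of (2), and rearrangement produces (3) for $f_\eps$. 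Passage to $\eps\to 0^+$ then follows by lower semicontinuity of $\varphi\mapsto\int_X\varphi|D|Df|^2|^2\,d\mu$ along the weak limits already established, together with convergence of the right-hand side built into the proof of (2). The main obstacle is legitimizing the pointwise optimization over $h''$ within the Dirichlet-form framework, where $h''(f_\eps)$ is only a measurable function rather than a free pointwise parameter; the rigorous workaround, which I would import verbatim from Savar\'e~\cite{S}, is to build a countable dense family of admissible $h$ and take measurable suprema to saturate the quadratic almost everywhere.
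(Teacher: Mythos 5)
There is a genuine gap at the very first step, and it sits exactly where the real content of the proposition lies. You regularize $f$ to $f_\eps=\beta_\eps f$ and then assert that ``the Cheeger chain rule already gives $|Df_\eps|^2\in N^{1,2}(X)\cap L^\infty(X)$.'' No such chain rule exists: the Cheeger differential $Df_\eps$ is merely a measurable ($\R^{k_\alpha}$-valued) function, and membership of $f_\eps$ in $\bigcap_n D(A^n)\cap{\rm Lip}_b(X)$ gives no weak differentiability of $|Df_\eps|^2$ whatsoever --- in a general metric measure space there is no pointwise second-order calculus to differentiate the gradient. Statement (1) is precisely the nontrivial regularity gain supplied by $\BE(K,\infty)$, so your argument assumes for the regularized functions the very fact to be proved; everything downstream in your proof of (2) (the weak-$\ast$ compactness on $A^*|Df_\eps|^2$, the identification of the limit) rests on this unproved claim. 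Note also that regularizing $f$ is unnecessary: $f\in\Di$ already has $Af\in N^{1,2}(X)$, so $f$ itself is an admissible input to the $\BE(K,\infty)$ inequality.

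The paper's proof avoids the trap by mollifying the \emph{wrong-regularity object} rather than $f$: it sets $u=|Df|^2\in L^1\cap L^\infty$ and $u_\eps=\beta_\eps u$, so that $u_\eps\in D(A)$ automatically, and then tests the $\BE(K,\infty)$ inequality (written for $f$ itself) against the nonnegative function $\beta_\eps u_\eps$. Using self-adjointness of $\beta_\eps$ and its commutation with $A$, this yields the a priori bound $\int_X|Du_\eps|^2\,d\mu\le-2\int_X(K|Df|^2+Df\cdot DAf)\,\beta_\eps u_\eps\,d\mu$, whose right-hand side stays bounded as $\eps\to0$; weak compactness then gives $|Df|^2\in N^{1,2}(X)$. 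For (2) the paper additionally needs the bound $|\mathcal{L}_f(\varphi)|\le\|\varphi\|_\infty\int_X g\,d\mu$ to invoke Riesz representation, obtained via an exhaustion by cutoffs and stochastic completeness --- a step your sketch compresses into ``total variations are controlled by the inequality itself.'' For (3) you and the paper are in the same position: both defer the chain-rule self-improvement to Savar\'e, which is acceptable. To repair your proof, replace the chain-rule claim by the paper's (or an equivalent) a priori energy estimate for a mollification of $|Df|^2$; as written, part (1) is not established.
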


\begin{proof}
Let us prove (1). Since $f\in \Di$, condition $\BE(K,\infty)$ implies that
\[
\int_X |Df|^2 A\varphi \, d\mu \geq 2 \int_X (K|Df|^2+Df\cdot DAf) \varphi \,d\mu
\]
for any positive $\varphi\in D(A)$.  Then, by setting $u=|Df|^2$ and $u_\varepsilon=\beta_\varepsilon u$, taking into account the fact that for any $\varphi\in D(A)$, since $A$ commutes with the Pazy convolution $\beta_\epsilon$, there holds
\[
\int_X u_\varepsilon A\varphi \, d\mu =\int_X u A \beta_\epsilon \varphi \, d\mu.
\]
We obtain
\begin{align*}
\int_X |Du_\epsilon|^2 \,d\mu =&
-\int_X u_\epsilon A u_\epsilon \,d\mu
=-\int_X u A \beta_\epsilon u_\epsilon \,d\mu \\
\leq& 
-2 \int_X (K|Df|^2+Df\cdot DAf) \beta_\epsilon u_\epsilon \,d\mu. 
\end{align*}
Passing to the limit as $\varepsilon\to 0$, we arrive at the conclusion that
$|Df|^2\in N^{1,2}(X)$ with
\[
\int_X |D|Df|^2|^2 \,d\mu \leq -2 \int_X (K|Df|^2+Df\cdot DAf) |Df|^2 \,d\mu. 
\]

Let us then prove (2). Using the $\BE(K,\infty)$ condition and by approximating any ${\rm Lip}_c(X)$ function with functions in $D(A)$, we can deduce that
\[
\int_X Au_\varepsilon \varphi \,d\mu \geq -\int_X g\beta_\epsilon \varphi \,d\mu
=-\int_X \beta_\varepsilon g \varphi \,d\mu
\]
for any $\varphi\in {\rm Lip}_c(X)$, where we have defined
\[
g=-2 (K|Df|^2+Df\cdot DAf).
\]
Passing to the limit as $\varepsilon \to 0$, we define
\[
{\mathcal L}_f(\varphi):= -\int_X D |Df|^2\cdot D\varphi \, d\mu +
\int_X g \varphi \,d\mu
\]
which is a positive linear functional defined on ${\rm Lip}_c(X)$. If in the estimate
\[
0\leq \int_X (Au_\varepsilon+\beta_\epsilon g)\varphi \, d\mu
\]
we take an increasing sequence of Lipschitz functions $\varphi_n$ 
such that $0\leq \varphi_n\leq 1$, $\varphi_n\equiv 1$ on $B_n(x_0)$ and 
$\varphi_n\equiv 0$ on $X\setminus B_{n+1}(x_0)$ for a fixed point $x_0$, by
the dominated convergence theorem,
\begin{align*}
0\leq & \int_X (Au_\varepsilon +\beta_\varepsilon g)\varphi_n\,d\mu 
\leq \int_X (Au_\varepsilon +\beta_\varepsilon g)\,d\mu =\int_X \beta_\varepsilon gd\mu =\int_X g\,d\mu.
\end{align*}
Note that 
\[
\int_X Au_\eps\, d\mu= \int_X \chi_X Au_\eps\, d\mu=-\int_X Du\cdot D\chi_X d\mu=0
\]
since $D\chi_X=0$. We also note that
\[ 
|Au_\eps|\le \frac{1}{\eps} \int_{s_\eps}^{S_\eps}|A T_s(u)| \rho(s/\eps)\, ds \le C_\eps \int_{s_\eps}^{S_\eps} |A T_s (u)|\ ds,
\]
and, therefore, $A u_\eps$ is in $ L^1(X)$. 

We have obtained
\[
|\mathcal{L}_f(\varphi)| \leq \|\varphi\|_\infty \int_X g\,d\mu, 
\]
and then $\mathcal{L}_f$ can be represented by a positive measure $\lambda\in \mathcal{M}(X)$. This proves that $|Df|^2\in D(A^*)$ with
\[
A^*|Df|^2 =\lambda -g\mu.
\]
The positivity of the measure
\[
\frac{1}{2} A^* |Df|^2 -(Df\cdot DAf+K|Df|^2) \mu
\]
follows by the positivity of $\lambda$.

The proof of (3) is rather technical and does not simplify in our setting, 
so we refer to \cite[Theorem 3.4]{S}.
\end{proof}

The Bakry--\'Emery condition in Definition~\ref{def:BE} has equivalent 
formulations, we refer to \cite{W} for the Riemannian case and to \cite{AGS, S} for the metric space setting. We recall here some of those equivalent formulations that we shall use.

\begin{proposition}
The following are equivalent:
\begin{enumerate}
\item $\E$ satisfies the $\BE(K,\infty)$ condition;
\item for any $f\in N^{1,2}(X)$, any $t>0$, and for every nonnegative $\varphi\in{\rm Lip}_c(X)$
\begin{equation}\label{BE2}
\int_X \varphi|D T_t f|^2\, d\mu \leq e^{-2Kt}\int_X \varphi T_t|Df|^2\, d\mu;
\end{equation}

\item for any $f\in L^2(X)$, any $t>0$, and for every nonnegative $\varphi\in{\rm Lip}_c(X)$
\begin{equation}\label{BE3}
\frac{e^{2Kt}-1}{K} \int_X \varphi |DT_t f|^2\, d\mu \leq
\int_X \varphi (T_t f^2 - (T_tf)^2)\, d\mu.
\end{equation}
\end{enumerate}
\end{proposition}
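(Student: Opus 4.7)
The plan is to close the cycle $(1)\Rightarrow(2)\Rightarrow(3)\Rightarrow(1)$. Each implication rests on a single interpolation idea: couple $T_s$ acting on the test function $\varphi$ with $T_{t-s}$ acting on $f$ (or on $f^2$), differentiate in $s$, and feed the appropriate form of the Bakry--\'Emery hypothesis into Gronwall's lemma.

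For $(1)\Rightarrow(2)$, fix $t>0$, a nonnegative $\varphi\in{\rm Lip}_c(X)$ and, by approximation, $f\in\Di$, and set
\[
G(s):=\int_X T_s\varphi\,|DT_{t-s}f|^2\,d\mu,\qquad s\in[0,t].
\]
Differentiating via $\partial_sT_s\varphi=AT_s\varphi$ and $\partial_sT_{t-s}f=-AT_{t-s}f$ gives
\[
\tfrac{1}{2}G'(s)=\tfrac{1}{2}\int_X|DT_{t-s}f|^2A(T_s\varphi)\,d\mu-\int_X T_s\varphi\,DT_{t-s}f\cdot DAT_{t-s}f\,d\mu,
\]
and the hypothesis \eqref{BE1}, tested with $T_s\varphi$ and $T_{t-s}f$, yields $G'(s)\ge 2KG(s)$. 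Gronwall then gives $G(0)\le e^{-2Kt}G(t)$, which is exactly \eqref{BE2} once one rewrites $G(t)=\int T_t\varphi\,|Df|^2\,d\mu=\int\varphi\,T_t|Df|^2\,d\mu$ by self-adjointness.

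For $(2)\Rightarrow(3)$, set $F(s):=\int_X T_s\varphi\,(T_{t-s}f)^2\,d\mu$; after integration by parts two cross terms cancel, leaving $F'(s)=2\int_X T_s\varphi\,|DT_{t-s}f|^2\,d\mu$. Integrating over $[0,t]$ identifies $F(t)-F(0)$ with $\int_X\varphi(T_tf^2-(T_tf)^2)\,d\mu$. Applying \eqref{BE2} to $T_{t-s}f$ with time $s$ (and self-adjointness) shows $\int_X T_s\varphi\,|DT_{t-s}f|^2\,d\mu\ge e^{2Ks}\int_X\varphi|DT_tf|^2\,d\mu$; integrating $e^{2Ks}$ over $[0,t]$ produces the factor $\tfrac{e^{2Kt}-1}{K}$ and hence \eqref{BE3}. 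For $(3)\Rightarrow(1)$, both sides of \eqref{BE3} vanish at $t=0$ and, using the Leibniz-type identity $Af^2=2fAf+2|Df|^2$, their first-order expansions both equal $2t\int\varphi|Df|^2\,d\mu$; the inequality is therefore saturated at order $t$ and becomes informative at order $t^2$. Expanding
\[
|DT_tf|^2=|Df|^2+2t\,Df\cdot DAf+O(t^2),\qquad T_tf^2-(T_tf)^2=2t|Df|^2+t^2\bigl(A|Df|^2+2Df\cdot DAf\bigr)+O(t^3),
\]
matching the coefficients of $t^2$ in \eqref{BE3}, and using the symmetry $\int\varphi A|Df|^2\,d\mu=\int|Df|^2 A\varphi\,d\mu$, one recovers exactly \eqref{BE1}.

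The main obstacle is not the formal algebra above but its justification: in the metric setting, $T_tf$, $AT_tf$, $DT_tf$ and $|DT_tf|^2$ need not a priori have the regularity required to differentiate under the integral, integrate by parts and apply Fubini. The standard remedy, following \cite{S}, is to carry out the whole argument with $f$ replaced by its Pazy regularization $\beta_\eps f\in\Di$, for which Proposition~\ref{SavResult} supplies the missing integration-by-parts identity and the $L^2$-membership of $|D\beta_\eps f|^2$; one then sends $\eps\to 0^+$ exploiting the $L^2$-continuity of $(T_t)_{t>0}$, lower semicontinuity of the relevant functionals, and density of $\Di$ in $N^{1,2}(X)$.
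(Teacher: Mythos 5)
Your proof is correct and follows essentially the same route as the paper: the implication $(1)\Rightarrow(2)$ via $G(s)=\int T_s\varphi|DT_{t-s}f|^2\,d\mu$ and Gronwall, $(2)\Rightarrow(3)$ via $F(s)=\int T_s\varphi(T_{t-s}f)^2\,d\mu$ and integrating the exponential weight, and $(3)\Rightarrow(1)$ by matching second-order Taylor coefficients in $t$. Your closing remark on the regularity gap and the need for the Pazy regularization $\beta_\eps$ and Proposition~\ref{SavResult} is a point the paper handles only implicitly, but the underlying argument is the same.
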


\begin{proof}
Let us show that (1) implies (2). Let $f\in N^{1,2}(X)$ and $\varphi\in D(A)\cap L^\infty(X)$ such that $A\varphi\in L^\infty(X)$ and $\varphi\geq 0$ be fixed. We define 
\[
G_\varphi(s)=\int_X T_s \varphi |D T_{t-s} f|^2 \,d\mu.
\]
Taking derivatives, and by $\BE(K,\infty)$, we get
\begin{align*}
G_\varphi'(s)=&
\int_X AT_s \varphi |DT_{t-s}f|^2\,d\mu -
2\int_X T_s \varphi DT_{t-s}f\cdot DAT_{t-s} f \,d\mu
\\
\geq &
2K \int_X T_s \varphi |DT_{t-s} f|^2\,d\mu = 2K G_\varphi(s).
\end{align*}
If for some $s>0$, $G_\varphi(s)=0$, then if $\varphi$ is not 
identically zero, $T_s\varphi>0$ and we must have $|DT_{t-s}f|= 0$ at $\mu$-a.e.  Hence $T_{t-s}f$ is constant and thus $f$ is also
constant. In this case (2) is trivially satisfied.
On the other hand, if $G_\varphi(s)>0$, we can integrate and conclude that
\[
G_\varphi(t) \geq G_\varphi(0)e^{2Kt},
\]
which is condition (2).

Let us show that (2) implies (3). By introducing the function
\[
G_\varphi(s)=\int_X T_s \varphi (T_{t-s} f)^2\, d\mu,
\]
taking derivatives, and taking into account (2), we obtain
\begin{align*}
G_\varphi'(s) & = \int_X AT_s \varphi (T_{t-s}f)^2\,d\mu -2\int_X T_s \varphi T_{t-s}f AT_{t-s}f\,d\mu = 2\int_X \varphi T_s|DT_{t-s} f|^2\,d\mu \\
& \geq
2e^{2Ks} \int_X \varphi |DT_sT_{t-s}f|^2 \,d\mu 
=2e^{2Ks}\int_X \varphi  |DT_{t}f|^2 \,d\mu.
\end{align*}
By integrating  with respect to $s$, we obtain condition (3). 

\noindent 
Let us now assume (3) and let us take $f\in D(A)$ such that $Af\in N^{1,2}(X)$ and $\varphi\in D(A)\cap L^\infty(X)$ with $A\varphi \in L^\infty$. We can define the function
\[
F_{\varphi}(t) =\int_X \varphi (T_t f^2 -(T_t f)^2)\,d\mu.
\]
The second order Taylor expansion formula implies 
\[
F_\varphi(t)=2t \int_X \varphi |Df|^2 \,d\mu  +t^2\left(
\int_X |Df|^2 A\varphi \,d\mu 
+2\int_X \varphi Df\cdot DAf \,d\mu 
\right)
+o(t^2).
\]
In the same way, we obtain that
\begin{align*}
G_\varphi (t) & =\frac{e^{2Kt}-1}{K} \int_X\varphi |DT_t f|^2\,d\mu \\
& = 2t \int_X \varphi |Df|^2 d\mu +2t^2 \left(
K\int_X \varphi |Df|^2\,d\mu 
+2 \int_X \varphi Df\cdot DAf \,d\mu \right)+o(t^2).
\end{align*}
Then, since
\begin{align*}
0\leq &
F_\varphi(t)-G_\varphi(t)  \\
=&2t^2\left(
\int_X A\varphi |Df|^2 \,d\mu 
-2\int_X \varphi Df\cdot DAf \,d\mu -2K \int_X \varphi |Df|\,d\mu 
\right)+o(t^2)
\end{align*} 
we obtain condition (1).
\end{proof}

Under the above hypotheses, we can prove the following self-im\-pro\-ving result of the Bakry--\'Emery condition. The result is essentially contained in a monograph by Bakry~\cite{Bakry}, see also Deuschel and Strook \cite[Lemma 6.2.39]{DS}, in the Riemannian setting. The self-improvement of the Bakry--\'Emery condition has also been obtained by Savar\'e in \cite[Corollary 3.5]{S} in very general setting. We provide a proof here for the sake of completeness.

\begin{proposition} \label{prop:SelfImp}
Suppose that a Dirichlet form $\E$ compatible with the differentiable structure satisfies the $\BE(K,\infty)$ condition for some 
$K\in\R$. Then for any $f\in N^{1,2}(X)$, any $t>0$, and every nonnegative $\varphi\in {\rm Lip}_c(X)$, we have
\[
\int_X\varphi|DT_t f|\, d\mu \leq e^{-Kt}\int_X \varphi T_t |Df|\, d\mu.
\]
\end{proposition}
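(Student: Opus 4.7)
The plan is to adapt the classical Bakry--\'Emery $\Gamma_2$ interpolation to our metric setting, regularizing the non-smooth function $r\mapsto\sqrt{r}$ near the origin. After using the Pazy convolution $\beta_\eps f\in\Di$ introduced at the start of the appendix to reduce to $f\in\Di$, I would pass to the limit $\eps\to 0$ using the continuity of $\E$ and the $L^2$-continuity of $T_t$. For fixed $f\in\Di$ and $\delta>0$, I introduce the interpolation
$$F_\delta(s) := \int_X T_s\varphi\,\sqrt{|DT_{t-s}f|^2+\delta}\,d\mu,\qquad s\in[0,t].$$
By Remark~\ref{rem:propHeat}(2) (self-adjointness of $T_s$), the boundary values of $F_\delta$ converge, as $\delta\to 0^+$, to the two sides of the claimed estimate: $F_\delta(0)\to\int_X\varphi|DT_tf|\,d\mu$ and $F_\delta(t)\to\int_X\varphi\,T_t|Df|\,d\mu$. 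The strategy is to show $F_\delta'(s)\ge KF_\delta(s)-R(s,\delta)$ with $\int_0^t R(s,\delta)\,ds\to 0$ as $\delta\to 0^+$, integrate the relation $\tfrac{d}{ds}(e^{-Ks}F_\delta(s))\ge -e^{-Ks}R(s,\delta)$, and let $\delta\to 0^+$.

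To differentiate $F_\delta$, set $h_r=|DT_rf|^2$ and $v=T_{t-s}f$. Proposition~\ref{SavResult}(1)--(2) guarantees $h_{t-s}\in N^{1,2}(X)\cap D(A^*)$, and the chain rule for $A^*$ applied to the smooth function $\psi_\delta(r)=\sqrt{r+\delta}$, together with $\partial_s h_{t-s}=-2Dv\cdot DAv$ and the definition of $\Gamma_2^*$, formally yields
$$F_\delta'(s)=2\int_X T_s\varphi\,\psi_\delta'(h_{t-s})\,d\Gamma_2^*(v)+\int_X T_s\varphi\,\psi_\delta''(h_{t-s})|Dh_{t-s}|^2\,d\mu.$$
By Proposition~\ref{SavResult}(2), the first integrand dominates $2KT_s\varphi\,\psi_\delta'(h_{t-s})h_{t-s}$; the negative contribution of $\psi_\delta''$ is absorbed by the Kato-type strengthening of Proposition~\ref{SavResult}(3), i.e.\ the pointwise bound $|Dh|^2/h\le 4(\gamma_2(v)-Kh)$ on $\{h>0\}$. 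A direct computation with $\psi_\delta'(h)=1/(2\sqrt{h+\delta})$ and $\psi_\delta''(h)=-1/(4(h+\delta)^{3/2})$ shows that the resulting cross term is exactly the nonnegative residue $\delta|Dh|^2/(4h(h+\delta)^{3/2})$. Combined with the elementary estimate $h/\sqrt{h+\delta}=\sqrt{h+\delta}-\delta/\sqrt{h+\delta}\ge\sqrt{h+\delta}-\sqrt{\delta}$, this produces
$$F_\delta'(s)\ge KF_\delta(s)-|K|\sqrt{\delta}\int_X T_s\varphi\,d\mu.$$

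Multiplying by $e^{-Ks}$, integrating over $[0,t]$, and letting $\delta\to 0^+$ (using monotone convergence for $\psi_\delta(h)\downarrow\sqrt{h}$, together with the $L^1$-contractivity of $T_t$ from Remark~\ref{rem:propHeat}(10) to control $\int T_s\varphi\,d\mu$) yields the stated inequality. The hard part is the rigorous justification of the chain rule for $A^*$ applied to $\psi_\delta(h_{t-s})$, together with the extraction of the pointwise Kato form from the integrated statement in Proposition~\ref{SavResult}(3); both require delicate truncations and localization to $\{h_{t-s}>0\}$ (on the singular part $\Gamma_2^\perp(v)\ge 0$ the nonnegative test function $T_s\varphi\,\psi_\delta'(h_{t-s})$ costs nothing). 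These are precisely the technical contributions of Savar\'e \cite{S}; modulo them, the rest of the argument reduces to the elementary algebra above.
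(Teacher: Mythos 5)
Your argument is correct and follows essentially the same Bakry--\'Emery interpolation scheme as the paper: introduce $s\mapsto\int_X T_s\varphi\,\psi_\delta(|DT_{t-s}f|^2)\,d\mu$ for a smooth regularization $\psi_\delta$ of the square root, differentiate in $s$, invoke Proposition~\ref{SavResult} -- in particular the pointwise form $|D|Dv|^2|^2\le 4|Dv|^2\bigl(\gamma_2(v)-K|Dv|^2\bigr)$, which is what both you and the paper's own computation actually use (the displayed statement of Proposition~\ref{SavResult}(3) is missing the factor $|Df|^2$) -- to obtain a Gr\"onwall-type differential inequality, and let $\delta\to 0$; the only deviation is the choice $\psi_\delta(h)=\sqrt{h+\delta}$ rather than the paper's $u^\delta=\sqrt{|Df|^2+\delta^2}-\delta$. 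Your version carries a harmless $O(\sqrt{\delta})$ remainder, whereas the paper's claimed clean inequality $G_\delta'(s)\ge KG_\delta(s)$ relies on $|Df|^2/(u^\delta+\delta)\ge u^\delta$, which only goes in the useful direction when $K\ge 0$; for $K<0$ the paper's argument in fact also needs a vanishing remainder of exactly the type you keep explicit.
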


\begin{proof}
We start by considering $f\in \Di$; we point out that by \eqref{BE3}, 
$\Di$ is dense in $N^{1,2}(X)$. 
Indeed, ${\rm Lip}_b(X)$ is dense in $N^{1,2}(X)$ and taking approximations of
functions $f\in N^{1,2}(X)\cap {\rm Lip}_b(X)$ in terms of the semigroup,
we get that $T_t f\in \Di$ for any $t>0$ by \eqref{BE3} and 
\[
AT_t f =T_{t/2} A T_{t/2} f\in D(A)\subset N^{1,2}(X).
\]
Here we also used the fact that by the Bakry--\'Emery condition \eqref{BE3}, if $f\in {\rm Lip}_b(X)$ then $T_tf\in{\rm Lip}_b(X)$.

Let us fix $\delta>0$ and set
\begin{equation}\label{defudelta}
u^\delta_t(x):=\sqrt{|DT_tf(x)|^2+\delta^2}-\delta.
\end{equation} 
We also fix a nonnegative function $\varphi\in N^{1,2}(X)\cap L^\infty(X)$ and introduce
the function
\[
G_\delta(s)=\int_X T_s\varphi  u^\delta_{t-s} \,d\mu.
\]
If we take the derivative, we obtain:
\begin{align*}
G'&_\delta(s)=
\int_X AT_s \varphi u^\delta_{t-s}\,d\mu 
-\int_X \frac{T_s\varphi}{u^\delta_{t-s}+\delta} DT_{t-s} f\cdot DAT_{t-s}f \,d\mu \\
&=
-\int_X \frac{1}{2(u^\delta_{t-s}+\delta)}DT_s\varphi\cdot D |DT_{t-s}f|^2\,d\mu 
-\int_X \frac{T_s\varphi}{u^\delta_{t-s}+\delta} DT_{t-s} f\cdot DAT_{t-s}f \,d\mu \\
&=
\int_X \frac{T_s\varphi}{2(u^\delta_{t-s}+\delta)} \,dA^* |DT_{t-s}f|^2
-\frac{1}{4}\int_X \frac{T_s\varphi}{(u^\delta_{t-s}+\delta)^3} |D| DT_{t-s}f|^2|^2\,d\mu \\
& \qquad \quad -\int_X \frac{T_s\varphi}{u^\delta_{t-s}+\delta} DT_{t-s} f\cdot DAT_{t-s}f \,d\mu \\
&=
\int_X\frac{T_s\varphi}{u^\delta_{t-s}+\delta} \,d\Gamma_2^*(T_{t-s}f)
-\frac{1}{4}\int_X \frac{T_s\varphi}{(u^\delta_{t-s}+\delta)^3} |D| DT_{t-s}f|^2|^2\,d\mu \\
&\geq
\int_X \frac{T_s\varphi}{ u^\delta_{t-s}+\delta}\gamma_2(T_{t-s}f)\,d\mu\\
& \qquad \quad -\int_X \frac{T_s\varphi}{(u^\delta_{t-s}+\delta)^3} |DT_{t-s}f|^2
\Big( \gamma_2 (T_{t-s}f)-K |DT_{t-s}f|^2\Big) \,d\mu,
\end{align*}
where in the last line we have used the properties contained in Pro\-po\-si\-tion~\ref{SavResult}.  We have
\begin{align*}
G'_\delta(s)\geq &
\int_X \frac{T_s\varphi}{(u^\delta_{t-s}+\delta)^3} 
\Big(
\delta^2\gamma_2(T_{t-s}f)- K\delta^2|DT_{t-s}f|^2 \\
& \qquad \quad + K |DT_{t-s}f|^2 (u^\delta_{t-s}+\delta)^2
\Big)\,d\mu \\
\geq&
K\int_X \frac{T_s\varphi}{u^\delta_{t-s}+\delta} |DT_{t-s}f|^2\, d\mu
\geq
K\int_X T_s\varphi u^\delta_{t-s}\,d\mu,
\end{align*}
where we used the fact that $\gamma_2(f)-K|Df|^2\geq 0$ $\mu$-a.e. by Proposition~\ref{SavResult}. We thus have $G'_\delta(s)\geq K G_\delta(s)$, and by integrating this over $(0,t)$, we arrive at
\[
\int_X \varphi (\sqrt{ |DT_t f|^2 +\delta^2}-\delta)\,d\mu
\leq e^{-Kt} \int_X T_t \varphi (\sqrt{|Df|^2+\delta^2}-\delta)\,d\mu.
\]
Passing to the limit $\delta\to 0$ and using the fact that the semigroup
is self-adjoint, we finally obtain the desired inequality.
\end{proof}

\end{document}